\documentclass[11 pt]{amsart}
\usepackage{graphicx}
\usepackage{amsmath}
\usepackage{mathtools}
\usepackage
{amstext, amscd, setspace, tikz-cd,mathtools, enumerate, graphics, latexsym, siunitx}

\usepackage{pst-node}
\usepackage{tikz-cd}

\usepackage{amssymb}
\usepackage{hyperref}

\usepackage{PTSerif}
\usepackage{graphicx}

\usepackage[cmtip,all]{xy}
\newcommand{\properideal}{%
\mathrel{\ooalign{$\lneq$\cr\raise.22ex\hbox{$\lhd$}\cr}}}

\newcommand{\GL}{{\rm GL}}

\theoremstyle{definition}
\numberwithin{equation}{subsubsection}

\newtheorem{theorem}{Theorem}[section]

\newtheorem{lemma}[theorem]{Lemma}

\newtheorem{definition}[theorem]{Definition}

\newtheorem{hypothesis}[theorem]{Hypothesis}

\numberwithin{equation}{section}

\title[A Generalization of a Theorem of Nakajima-Landweber-Stong]{A Generalization of a Theorem of Nakajima-Landweber-Stong}

\author{Shubham Jaiswal, Tony J. Puthenpurakal}

\address{Department of Mathematics IIT Bombay, Powai, Mumbai 400 076, India.}

\email{mynameissj555@gmail.com, tputhen@gmail.com}

\subjclass[2020]{13A50, 13H05}

\date{\today}

\keywords{modular invariant rings, dedekind domains, transvections, regular rings}

\begin{document}

\begin{abstract}
The main result of this paper is a generalization of a
theorem of Nakajima-Landweber-Stong to the modular invariant rings of transvection groups over Dedekind domains. More precisely, let $A$ be a Dedekind domain and $K$ be its field of fractions. Assume that $A$ contains a finite field $\mathbb{F}_q$ with $q=p^r$ elements for a prime $p$. Let $n\geq 2$ and consider a finite subgroup $G$ of $\mathrm{GL}(A^n)$ such that every non-identity element of $G$ inside $\GL(K^n)$ is a transvection. Consider the ring $A[X_1,X_2,\dots, X_n]$ and let $G$ act linearly on the ring (fixing $A$). Then $(A[X_1,X_2,\dots, X_n])^{G}$ is regular.
\end{abstract}

\maketitle

\section{Introduction}

We begin by recalling the notion of transvections.

\begin{definition}[\S 8.2, \cite{smith1995polynomial}]

Let $K$ be a field and $V=K^n$. An element $\sigma\in \GL(K^n)$ is called a transvection with hyperplane $H_{\sigma}$, transvector $0\neq x\in H_{\sigma}$ and direction $\text{Span}_K\{x\}$ if there is a linear functional $\phi_{\sigma}:V\rightarrow K$ such that $H_{\sigma}=\ker(\phi_{\sigma})$ and $\sigma(v)=v+\phi_{\sigma}(v)\cdot x$ for all $v\in V$.
    
\end{definition}

We have the theorem by Nakajima-Landweber-Stong (Theorem 8.2.13 in \cite{smith1995polynomial}) which states: 

\begin{theorem}
    \label{NLS thm}

       Let $\mathbb{F}$ be a finite field of characteristic $p>0$. Let $n\geq 2$. We consider a finite subgroup $G$ of $\mathrm{GL}(\mathbb{F}^n)$ such that every non-identity element of $G$ inside $\GL(\mathbb{F}^n)$ is a transvection. Consider the ring $\mathbb{F}[X_1,X_2,\dots, X_n]$ and let $G$ act linearly on the ring (fixing $\mathbb{F}$). Then $(\mathbb{F}[X_1,X_2,\dots, X_n])^{G}$ is a polynomial algebra.  
\end{theorem}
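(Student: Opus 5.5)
We will prove Theorem~\ref{NLS thm} by induction on $|G|$, using the standard structure of transvection groups. Since $G$ consists of transvections and the identity, every element has order $p$ or $1$, so $G$ is a $p$-group. Its fixed space $V^G$ in $V^*=\mathbb{F}^n$ (linear forms) is therefore nonzero. The key structural fact we aim for is the following. For two transvections $\sigma,\tau\in G$ with hyperplanes $H_\sigma,H_\tau$, the product $\sigma\tau$ must again be a transvection or the identity. A direct computation with $\sigma(v)=v+\phi_\sigma(v)x_\sigma$ then shows that either all elements of $G$ share a common direction, or all share a common hyperplane. Dually, acting on linear forms, in either case we obtain a $G$-stable flag piece: there is a linear form $z\in V^*$ fixed by $G$, and a hyperplane $W\subset V^*$ with $z\in W$ on which $G$ acts trivially or through a smaller transvection group.

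Using this, we proceed as in Smith's treatment. Choose a linear form $x_n$ such that $(V^*)^G$ together with $x_n$ spans a suitable subspace, and consider the normal subgroup $N\subseteq G$ fixing a hyperplane of $V^*$. We split into two cases.

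In the \emph{common hyperplane} case, $G$ is an elementary abelian $p$-group. Each $g$ sends a fixed complement variable $x_n$ to $x_n+\ell_g$, with $\ell_g$ ranging over an $\mathbb{F}_p$-subspace $U$ of linear forms in $x_1,\dots,x_{n-1}$, and $G$ fixes $x_1,\dots,x_{n-1}$. Then
\[
f=\prod_{u\in U}(x_n+u)
\]
is invariant of degree $|G|$. The forms $x_1,\dots,x_{n-1},f$ are algebraically independent, and the product of their degrees equals $|G|$. By the standard degree criterion (a set of homogeneous invariants forming an hsop whose degree product equals $|G|$ generates the invariant ring), the invariant ring is polynomial.

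In the \emph{common direction} case, $G$ acts on $V^*$ fixing a line $\mathbb{F}z$ pointwise, and $G$ acts on $V^*/\mathbb{F}z$ through transvections of smaller rank or dimension. We pick the orbit product $N(x)=\prod_{y\in Gx}y$ for a suitable form $x$, and reduce to $\mathbb{F}[V^*/z]^{\bar G}$. Induction on $n$ gives that this quotient invariant ring is polynomial. We then lift generators and again apply the degree-product criterion.

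The main obstacle is the structural dichotomy, namely that a finite group all of whose nonidentity elements are transvections has a common hyperplane or a common direction. We would first try the direct computation. If $\sigma\tau$ is a transvection, then $(\sigma\tau-1)$ has rank at most $1$. Expanding gives $\phi_\sigma(\cdot)x_\sigma+\phi_\tau(\cdot)x_\tau+\phi_\sigma(x_\tau)\phi_\tau(\cdot)x_\sigma$, which has rank $\le1$ only if $x_\sigma\parallel x_\tau$ or $\phi_\sigma\parallel\phi_\tau$. Pairwise this gives the dichotomy, and a short argument with three elements makes it global.
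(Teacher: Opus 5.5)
Your dichotomy via the rank of $\sigma\tau-1$ is correct and does the job of the paper's Lemma~\ref{trans lem}(3) and Lemma~\ref{conjugate lem}. Your first case is also fine: on linear forms $G$ fixes $x_1,\dots,x_{n-1}$ and translates $x_n$, and you take $x_1,\dots,x_{n-1}$ and the orbit product of $x_n$. This is exactly the paper's $E^*_D(r)$ case in the proof of Theorem~\ref{main for field}, done without any decomposition of $G$.

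\textbf{The gap is the other case}, the paper's $E_D(r)$ case. Here all elements share a direction $z$ on linear forms, $g(y_j)=y_j+\psi_g(y_j)\,z$, so $G$ acts \emph{trivially} on $V^*/\mathbb{F}z$, not through smaller transvection groups. Hence $\mathbb{F}[V^*/z]^{\bar G}=\mathbb{F}[\bar y_1,\dots,\bar y_{n-1}]$, and induction on $n$ gives nothing. Moreover these generators do not lift: the image of $\mathbb{F}[V]^G$ in $\mathbb{F}[V]/(z)$ is a proper subalgebra, and identifying it is the whole problem. Orbit products of linear forms do not help either.

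For example, take $\mathbb{F}=\mathbb{F}_8$, $n=3$ and $G=\{I+aE_{1,3}+a^2E_{2,3} : a\in\mathbb{F}_8\}$. This is a group of order $8$ of transvections with a common hyperplane, acting on forms by $y_1\mapsto y_1+ay_3$, $y_2\mapsto y_2+a^2y_3$.
\begin{itemize}
\item The invariants $y_3$, $y_1^2+y_2y_3$, $y_2^4+y_1y_3^3$ form an hsop with degree product $8=|G|$, so they generate $\mathbb{F}_8[V]^G$.
\item The image of $\mathbb{F}_8[V]^G$ modulo $y_3$ is $\mathbb{F}_8[\bar y_1^2,\bar y_2^4]$, so $\bar y_1,\bar y_2$ do not lift.
\item The degree-$2$ generator cannot come from orbit products of linear forms. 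Every non-invariant linear form has orbit size at least $4$, because the $\mathbb{F}_2$-linear map $a\mapsto c_1a+c_2a^2$ has kernel of size at most $2$.
\end{itemize}
So in this case you must actually construct $n-1$ invariants whose reductions modulo $z$ have only the trivial common zero and whose degrees multiply to $|G|$. Your sketch supplies no such construction. The announced induction on $|G|$ and the subgroup $N$ also never enter.

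The paper fills this case by asserting coordinates in which $G=\{I+\alpha_1E_{1,n}+\cdots+\alpha_rE_{r,n} : \alpha_i\in A_i\}$ with the $\alpha_i$ varying independently. It then takes $\prod_{\alpha\in A_i}(y_i+\alpha y_n)$ together with $y_{r+1},\dots,y_n$. If you borrow this, be careful: Lemma~\ref{comm alg lemma} only yields an abstract isomorphism $E_D(r)\to D^r$, not one induced by a change of basis.
\begin{itemize}
\item Over $\mathbb{F}_p$ the transvectors of $G$ form a linear subspace of the common hyperplane, so such coordinates exist and the argument is fine.
\item In the $\mathbb{F}_8$ example no such coordinates exist. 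A product form of order $8$ would place two distinct nonzero transvectors on one $\mathbb{F}_8$-line, whereas $ae_1+a^2e_2$ and $be_1+b^2e_2$ are proportional only when $a=b$.
\end{itemize}
One way to close the gap over any finite field is to work with $t_j=y_j/z$. Take additive ($p$-)polynomials in the $t_j$ that vanish on the group of translation vectors. Choose them, by Frobenius-twisted row reduction, so that their top-degree forms have only the trivial common zero, then homogenize with $z$. This is how $y_1^2+y_2y_3$ and $y_2^4+y_1y_3^3$ arise.
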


In this paper we establish the following result which is a generalization of Theorem \ref{NLS thm} to the modular invariant rings of transvection groups over Dedekind domains.

\begin{theorem}\label{main trasvec}

    Let $A$ be a Dedekind domain and let $K$ be its field of fractions. Assume that $A$ contains a finite field $\mathbb{F}_q$ with $q=p^r$ elements for a prime $p$. Let $n\geq 2$. We consider a finite subgroup $G$ of $\mathrm{GL}(A^n)$ such that every non-identity element of $G$ inside $\GL(K^n)$ is a transvection. Consider the ring $A[X_1,X_2,\dots, X_n]$ and let $G$ act linearly on the ring (fixing $A$). Then $(A[X_1,X_2,\dots, X_n])^{G}$ is regular.    
\end{theorem}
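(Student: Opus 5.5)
Write $S=A[X_1,\dots,X_n]$ and $R=S^G$. Regularity is a local property, so we reduce to checking it at each maximal ideal, and in fact fibrewise over $\operatorname{Spec}A$.

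\textbf{Structure of $R\to A$.} Since $G$ is finite, $S$ is integral over $R$ and $R$ is a finitely generated $A$-algebra. Also $R$ is $A$-torsion free, being a subring of $S$. Over a Dedekind domain, torsion free means flat, so $R$ is flat over $A$.

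\textbf{Localization.} Invariants commute with flat base change, because they are the kernel of the map $S\to\prod_{g\in G}S$, $f\mapsto (gf-f)_g$, and a flat base change preserves kernels. So for each maximal ideal $\mathfrak m$ of $A$ we get $R_{\mathfrak m}=(A_{\mathfrak m}[X])^G$, and we may assume $A$ is a DVR with uniformizer $\pi$ and residue field $k=A/\pi$.

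\textbf{Regularity criterion.} Let $\mathfrak P$ be a prime of $R$.
\begin{itemize}
\item If $\mathfrak P$ lies over $0$, then $R_{\mathfrak P}$ is a localization of $R\otimes_A K=K[X]^G$. By Theorem \ref{NLS thm} this is a polynomial ring. Theorem \ref{NLS thm} is stated for finite fields; for an arbitrary field one needs the general Nakajima theorem, or a direct argument from the filtration below. Hence $R_{\mathfrak P}$ is regular.
\item If $\pi\in\mathfrak P$, we use the standard fact: for $R$ flat over the DVR $A$, if the fibre $R/\pi R$ is regular then $R_{\mathfrak P}$ is regular. This holds because $\pi$ is then a nonzerodivisor and $R_{\mathfrak P}/\pi$ is regular.
\end{itemize}
So it suffices to show that $R/\pi R$ is regular, and we aim to show it is a polynomial ring over $k$.

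\textbf{Key steps.}
\begin{enumerate}
\item Show $R/\pi R\hookrightarrow k[X]^{\bar G}$, where $\bar G$ is the image of $G$ in $\GL(k^n)$. The injectivity uses $R\cap\pi S=\pi R$.
\item Note that elements of $\bar G$ are reductions of transvections over $A$, hence transvections or the identity. Unipotence survives reduction, and $\sigma-1$ has rank at most one.
\item Apply Theorem \ref{NLS thm}, or its version over arbitrary fields of characteristic $p$, to see that $k[X]^{\bar G}$ is polynomial.
\item Prove that $R/\pi R\to k[X]^{\bar G}$ is surjective.
\end{enumerate}

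\textbf{Main obstacle: surjectivity in step 4.} Reduction of invariants need not be onto. I would avoid computing directly and instead construct explicit invariants using the structure theory for transvection groups. Over $K$, a finite $p$-group of transvections can be conjugated so that it is triangular, and the Nakajima-type generators are orbit products
\[
f_i=\prod_{v\in G\cdot X_i}v
\]
in a suitable basis, with $\prod_i\deg f_i=|G|$.

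I would try to choose a basis of the lattice $A^n$ (using that $A$ is a PID locally) adapted to a flag stable under $G$. Then I would check that the same orbit products are defined over $A$ and have the degree product equal to $|G|$. Write $h=\prod_i \deg f_i=|G|$. Then $A[f_1,\dots,f_n]\subseteq R$, and $S$ is free over $A[f]$ of rank $h$. Over $K$ this forces $R_K=K[f]$, and a comparison of Hilbert series or a normality argument then gives $R=A[f]$, which is regular.

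This direct route would bypass the fibre argument, but its weakness is that $|\bar G|$ may be smaller than $|G|$: the degree product may exceed $|\bar G|$, so the fibre need not be polynomial. Consequently the key technical point I expect to fight with is controlling $R\otimes k$ when $G\to\bar G$ is not injective, possibly by an induction on $|G|$ through a normal subgroup of transvections with a common hyperplane.
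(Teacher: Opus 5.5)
\textbf{There is a genuine gap: the DVR case is never proved.} Your preliminary reductions are sound and parallel the paper's reduction to the field and DVR cases: invariants commute with flat base change, the generic fibre is a field case, and for $R$ flat over a DVR a regular closed fibre gives regularity at every prime containing $\pi$. But the DVR case carries all the content, and the route you set up for it cannot be completed, because Step~4 is false in general. Each $R_d$ is a free $\mathcal{O}$-module of rank $\dim_K (K[X]^G)_d$. So $R/\pi R$ and $K[X]^G$ have the same Hilbert series $H(s)$, with $\lim_{s\to 1}(1-s)^nH(s)=1/|G|$. An isomorphism $R/\pi R\cong k[X]^{\bar G}$ would therefore force $|G|=|\bar G|$. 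Concretely, take $\mathcal{O}=\mathbb{F}_p[t]_{(t)}$, $n=2$ and $G=\{I+ctE_{1,2} : c\in\mathbb{F}_p\}$. Then $\bar G=1$, while $R=\mathcal{O}[z_2,\,z_1^p-t^{p-1}z_2^{p-1}z_1]$, so $R/tR=k[z_1^p,z_2]\subsetneq k[z_1,z_2]=k[X]^{\bar G}$. Here the fibre is still regular, but $k[X]^{\bar G}$ is the wrong ring to compare it with. The concern $|\bar G|<|G|$ that you raise at the end is exactly what rules out Step~4; it does not affect the direct route.

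\textbf{The missing idea is the paper's: ignore $\bar G$ and prove that $\mathcal{O}[X]^G$ is itself a polynomial ring over $\mathcal{O}$.} First put $G$ in standard form over $\mathcal{O}$: find an $\mathcal{O}$-basis in which every element is $I+\sum_i a_iE_{i,n}$ (common hyperplane $H$) or $I+\sum_i a_iE_{n,i}$ (common direction $Kx$), with $a_i\in\mathcal{O}$. This is the role of Lemma~\ref{new lemma}, but it follows at once because $H\cap\mathcal{O}^n$, resp.\ $Kx\cap\mathcal{O}^n$, is a direct summand of $\mathcal{O}^n$. Next take invariants $f_1,\dots,f_n$, monic in suitable variables, so that $\mathcal{O}[z]$ is finite over $U=\mathcal{O}[f_1,\dots,f_n]$. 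Use $\prod\deg f_i=|G|$ to get $\operatorname{Frac}U=\operatorname{Frac}R$, and conclude $R=U$ since $U$ is normal.

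This is your direct route, and your reason for dropping it is a confusion. Once $R=\mathcal{O}[f_1,\dots,f_n]$, $R$ is regular outright and its fibre is the polynomial ring $k[\bar f_1,\dots,\bar f_n]$. The inequality $\prod\deg f_i>|\bar G|$ only means $R/\pi R\subsetneq k[X]^{\bar G}$, as in the example above.

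What you leave as ``I would try'' is where the real work lies. For a common direction, $z_1,\dots,z_{n-1},\prod_{g\in G}g(z_n)$ suffice. For a common hyperplane, do not take for granted that orbit products of some basis reach $|G|$. That requires the $\mathbb{F}_p$-space $W$ of transvectors to split as $\bigoplus_i (W\cap Kh_i)$ for a basis $h_i$, that is, by a linear change of coordinates; an abstract splitting as in Lemma~\ref{comm alg lemma} is not enough. Such a splitting can fail. Take $n=3$ and $W=\langle(1,0),(0,1),(t,t^2)\rangle_{\mathbb{F}_2}\subset\mathbb{F}_2(t)^2$. No two nonzero elements of $W$ are proportional, so every basis yields orbit products of degree product at least $16>8=|G|$. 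There a quadric generator such as $(t^2+t)z_1^2+z_2^2+z_3\bigl((t^2+t)z_1+z_2\bigr)$ is needed.
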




    

 Firstly in Section \ref{aux sec}, we state and prove some integral lemmas. Then in Section \ref{main}, we prove Theorem \ref{main trasvec} by first proving it for fields, then for DVRs and finally for Dedekind domains.

\section{Integral Lemmas}\label{aux sec}

Consider the hypothesis below.

\begin{hypothesis}
    \label{hypo field}

Let $K$ be a field and assume that $K$ contains a finite field $\mathbb{F}_q$ with $q=p^r$ elements for a prime $p$. Let $D$ be a finite subgroup of $(K,+)$. Let $n\geq 2$. Let $E_{i,j}$ denote the $n \times n$ matrix with a $1$ in the $i^{th}$ row and $j^{th}$ column, and zeros elsewhere for $i, j \in \{1, \dots, n\}$. For $r = 1, \ldots, n-1$, consider a subgroup $E_{D}(r)$ of $\mathrm{GL}(K^n)$ which is generated by the set $\{S_i(\alpha)= I + \alpha E_{i,n} \mid \alpha \in D, i = 1, \ldots, r\}$ of transvections inside $\GL(K^n)$ and consider a subgroup $E^*_{D}(r)$ of $\mathrm{GL}(K^n)$ which is generated by the set $\{(S_i(\alpha))^T= I + \alpha E_{n,i} \mid \alpha \in D, i = 1, \ldots, r\}$ of transvections inside $\GL(K^n)$. 

\end{hypothesis}

\begin{lemma}\label{isom lemma} Assume Hypothesis \ref{hypo field}.

\begin{enumerate}
\item The subgroup $E_D(r)\subset \GL(K^n)$ is isomorphic to $D^r$ and thus an elementary abelian $p$-group.\smallskip

\item The subgroup $E^*_D(r)\subset \GL(K^n)$ is also isomorphic to $D^r$ and thus an elementary abelian $p$-group.
\end{enumerate}
    
\end{lemma}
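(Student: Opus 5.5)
The plan is to write down an explicit homomorphism from $D^r$ onto $E_D(r)$ and check that it is injective. Concretely, define
\[
\Phi: D^r \longrightarrow \GL(K^n), \qquad \Phi(\alpha_1,\dots,\alpha_r) = I + \sum_{i=1}^r \alpha_i E_{i,n}.
\]
The key computation is the product rule for the matrix units $E_{i,n}$ with $i\le r\le n-1$. For such $i,j$ we have $E_{i,n}E_{j,n} = \delta_{n,j}E_{i,n} = 0$, because $j \le r < n$. Hence for any $M=\sum_i \alpha_i E_{i,n}$ and $N=\sum_j \beta_j E_{j,n}$ we get $MN=0$. Therefore $(I+M)(I+N) = I+M+N$, which gives
\[
\Phi(\alpha)\Phi(\beta)=\Phi(\alpha+\beta).
\]
So $\Phi$ is a group homomorphism from the additive group $D^r$. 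In particular $\Phi(\alpha)$ is invertible, with inverse $\Phi(-\alpha)$, so $\Phi$ really lands in $\GL(K^n)$.

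Next we identify the image and the kernel.
\begin{itemize}
\item \emph{Image.} It contains every generator, since $S_i(\alpha)=\Phi(0,\dots,\alpha,\dots,0)$. Conversely, $\Phi(\alpha_1,\dots,\alpha_r)=S_1(\alpha_1)\cdots S_r(\alpha_r)$ by the product rule above. Hence the image is exactly $E_D(r)$.
\item \emph{Kernel.} The entries of $\Phi(\alpha)$ in positions $(i,n)$ are precisely the $\alpha_i$. So $\Phi(\alpha)=I$ forces $\alpha=0$, and $\Phi$ is injective.
\end{itemize}
Together these give $E_D(r)\cong D^r$.

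For the elementary abelian claim, note that $D$ is a finite subgroup of $(K,+)$ and $K$ has characteristic $p$ because it contains $\mathbb{F}_q$. Thus $p\cdot d = 0$ for every $d\in D$, so $D$ is an elementary abelian $p$-group, and so is $D^r$.

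Part (2) follows by transposing. The map $g\mapsto (g^T)^{-1}$ is an automorphism of $\GL(K^n)$, and it carries $S_i(\alpha)$ to $(S_i(-\alpha))^T$. Since $D$ is closed under negation, it maps $E_D(r)$ isomorphically onto $E^*_D(r)$. Alternatively, one can repeat the computation directly using $E_{n,i}E_{n,j}=\delta_{i,n}E_{n,j}=0$ for $i\le r<n$.

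No step is a real obstacle. The only point needing care is the vanishing $E_{i,n}E_{j,n}=0$, and this is exactly where the restriction $r\le n-1$ is used.
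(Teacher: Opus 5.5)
Your proof is correct and follows essentially the paper's route: the map $(\alpha_1,\dots,\alpha_r)\mapsto S_1(\alpha_1)\cdots S_r(\alpha_r)=I+\sum_i\alpha_iE_{i,n}$ is an isomorphism onto $E_D(r)$ because $E_{i,n}E_{j,n}=0$ for $j<n$, and part (2) comes from transposition. Your injectivity check, which reads off the $(i,n)$ entries, is tighter than the paper's remark that the subgroups for distinct $i$ meet trivially, since pairwise trivial intersection alone does not force a direct product when $r\ge 3$; likewise, using the automorphism $g\mapsto (g^T)^{-1}$ avoids the appeal to commutativity that the paper needs to make transposition a homomorphism.
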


\begin{proof}\hfill

\begin{enumerate}
\item 

We mimic the proof of Lemma 8.2.4 in \cite{smith1995polynomial}. By matrix multiplication 
\[
E_{i,j} E_{k,l} = \begin{cases}
0 & \text{for } j \neq k \\
E_{i,l} & \text{for } j = k.
\end{cases}
\]
Hence for $1 \leq i, j < n$,
\[
S_i(\alpha) S_j(\beta) = (I + \alpha E_{i,n})(I + \beta E_{j,n}) = I + \alpha E_{i,n} + \beta E_{j,n}
= (I + \beta E_{j,n})(I + \alpha E_{i,n}) = S_j(\beta) S_i(\alpha).
\]
Thus the matrices $S_i(\alpha)$, $S_j(\beta)$ where $i, j = 1, \ldots, n-1$ and $\alpha, \beta \in D$ commute pairwise so $E_{D}(r)$ is a commutative group. For $i = j$ we obtain $S_i(\alpha) S_i(\beta) = S_i(\alpha + \beta)$ and therefore the matrices $\{S_i(\alpha) \mid \alpha \in D\}$ for fixed $i$ form a subgroup of $\mathrm{GL}(n, K)$ isomorphic to $D$. The subgroup generated by $S_i(\alpha)$ and $S_j(\beta)$ for $i \neq j$ and $\alpha, \beta \in D$ have only the identity element in common. Therefore the map \[
\sigma_r : D^r \longrightarrow E_{D}(r)
\]
defined by $\sigma_r(\alpha_1, \ldots, \alpha_r) = S_1(\alpha_1) \cdots S_r(\alpha_r)$ is an isomorphism. Clearly $D$ is an elementary abelian $p$-group i.e. $D\cong (\mathbb{F}_p)^l$ for some $l$. So $E_{D}(r)$ is an elementary abelian $p$-group of order $p^{lr}$.\smallskip

\item For $1 \leq i, j < n$ and $\alpha, \beta \in D$ we have 
$(S_i(\alpha))^T (S_j(\beta))^T= (S_j(\beta) S_i(\alpha))^T=(S_i(\alpha) S_j(\beta))^T= (S_j(\beta))^T (S_i(\alpha))^T$. Thus $E^*_{D}(r)$ is a commutative group. Also for $i = j$ we obtain that $(S_i(\alpha))^T (S_i(\beta))^T=(S_i(\beta)S_i(\alpha))^T = (S_i(\beta + \alpha))^T=(S_i(\alpha+\beta))^T$. Therefore the map
\[
\lambda_r : E^*_D(r) \longrightarrow E_{D}(r)
\]
which maps $\lambda_r((S_1(\alpha_1))^T \cdots (S_r(\alpha_r))^T) = S_1(\alpha_1) \cdots S_r(\alpha_r)$ is an isomorphism. Hence we are done by part (1).
\end{enumerate}\end{proof}




   





\begin{hypothesis}
\label{hypo dvr}
  Let $(\mathcal{O},(\pi))$ be a DVR and let $K=\mathcal{O}_\pi$ where $\mathcal{O}_\pi$ is the localization of $\mathcal{O}$ at the multiplicative set $\{\pi^i\}_{i}$ (which is the field of fractions of $\mathcal{O}$). Assume that $\mathcal{O}$ contains a finite field $\mathbb{F}_q$ with $q=p^r$ elements for a prime $p$. Let $D$ be a finite subgroup of $(\mathcal{O},+)$. Let $n\geq 2$. For $r = 1, \ldots, n-1$, consider a subgroup $E_{D}(r)$ of $\mathrm{GL}(\mathcal{O}^n)$ which is generated by the set $\{S_i(\alpha)= I + \alpha E_{i,n} \mid \alpha \in D, i = 1, \ldots, r\}$ of transvections inside $\GL(K^n)$ and consider a subgroup $E^*_{D}(r)$ of $\mathrm{GL}(\mathcal{O}^n)$ which is generated by the set $\{(S_i(\alpha))^T= I + \alpha E_{n,i} \mid \alpha \in D, i = 1, \ldots, r\}$ of transvections inside $\GL(K^n)$. 
  \end{hypothesis}

    
  






\begin{lemma}\label{new lemma}

   Assume Hypothesis \ref{hypo dvr}. Consider a $K$-basis $\{v_1,v_2,\dots v_n\}$ of $K^n$.
   
\begin{enumerate}
\item \begin{enumerate}

\item  Suppose that $\sigma\in \GL(\mathcal{O}^n)\subset \GL(K^n)$ with $ord(\sigma)=p$ such that $\sigma(v_i)=v_i$ for all $1\leq i\leq n-1$ and $\sigma(v_n)=v_{n-1}+v_n$. Then there exists an $\mathcal{O}$-basis $\{w_1,w_2,\dots w_n\}$ of $\mathcal{O}^n$ with $\sigma(w_i)=w_i$ for $1\leq i\leq n-1$ and $\sigma(w_n)= w_{n-1}+ w_n$.\smallskip

\item    Further suppose that $\sigma'\in \GL(\mathcal{O}^n)\subset \GL(K^n)$ such that $\sigma'(v_i)=v_i$ for all $1\leq i\leq n-1$ and $\sigma'(v_n)=\alpha v_{j}+v_n$ for some $1\leq j\leq n-1$ and some $\alpha\in \mathcal{O}$. Then for the same $\mathcal{O}$-basis $\{w_1,w_2,\dots w_n\}$ of $\mathcal{O}^n$ above, we have $\sigma'(w_i)=w_i$ for $1\leq i\leq n-1$ and $\sigma'(w_n)= \alpha w_{j}+ w_n$.
\end{enumerate} 

\smallskip

\item \begin{enumerate}

\item Suppose that $\sigma\in \GL(\mathcal{O}^n)\subset \GL(K^n)$ with $ord(\sigma)=p$ such that $\sigma(v_i)=v_i$ for all $1\leq i\leq n-2$ and $i=n$ and $\sigma(v_{n-1})=v_{n-1}+v_n$. Then there exists an $\mathcal{O}$-basis $\{w'_1,w'_2,\dots w'_n\}$ of $\mathcal{O}^n$ with $\sigma(w'_i)=w'_i$ for $1\leq i\leq n-2$ and $i=n$ and $\sigma(w'_{n-1})= w'_{n-1}+ w'_n$.\smallskip

\item    Further suppose that $\sigma'\in \GL(\mathcal{O}^n)\subset \GL(K^n)$ such that $\sigma'(v_i)=v_i$ for all $i\neq j$ and $\sigma'(v_j)= v_{j}+\alpha v_n$ for some $1\leq j\leq n-1$ and some $\alpha\in \mathcal{O}$. Then for the same $\mathcal{O}$-basis $\{w'_1,w'_2,\dots w'_n\}$ of $\mathcal{O}^n$ above, we have $\sigma'(w'_i)=w'_i$ for all $i\neq j$ and $\sigma'(w'_j)= w'_{j}+\alpha w'_n$.

   \end{enumerate}

  \end{enumerate}
\end{lemma}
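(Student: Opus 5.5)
The plan is to work with the nilpotent part $N=\sigma-I$. By hypothesis $N(v_i)=0$ for $i\le n-1$ and $N(v_n)=v_{n-1}$, so $N^2=0$ and $N$ has rank one over $K$. Its kernel is the hyperplane $H=\mathrm{Span}_K\{v_1,\dots,v_{n-1}\}$ and its image is the line $Kv_{n-1}$. Since $\sigma\in\GL(\mathcal{O}^n)$, $N$ maps $\mathcal{O}^n$ into $\mathcal{O}^n$, and I would use the following steps.

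\begin{enumerate}
\item $M:=H\cap\mathcal{O}^n$ is a saturated submodule of $\mathcal{O}^n$. Since $\mathcal{O}$ is a PID, $M$ is a free direct summand of rank $n-1$ and $\mathcal{O}^n/M$ is free of rank one.
\item $N$ induces an injection $\mathcal{O}^n/M\hookrightarrow Kv_{n-1}\cap\mathcal{O}^n=\mathcal{O}u$, where $u$ generates this rank-one saturated line. Its image is $N(\mathcal{O}^n)=\pi^k\mathcal{O}u$ for some $k\ge 0$.
\item Choose $w_n\in\mathcal{O}^n$ with $N(w_n)=\pi^k u$. Then the class of $w_n$ generates $\mathcal{O}^n/M$, so $\mathcal{O}^n=M\oplus\mathcal{O}w_n$.
\item Put $w_{n-1}:=N(w_n)=\sigma(w_n)-w_n\in M$ and extend $w_{n-1}$ to a basis $w_1,\dots,w_{n-1}$ of $M$. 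Then $\sigma$ fixes $w_1,\dots,w_{n-1}$ and $\sigma(w_n)=w_{n-1}+w_n$, which is (1)(a).
\end{enumerate}

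Part (2)(a) is the transposed situation: apply the same argument to the dual module $\mathrm{Hom}(\mathcal{O}^n,\mathcal{O})$ with $\sigma^T$, or run it directly with kernel $\mathrm{Span}_K\{v_i: i\neq n-1\}$ and image $Kv_n$.

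The main obstacle is the extension in step 4. It requires $w_{n-1}$ to be unimodular in $M$, i.e.\ $k=0$, so that $N(\mathcal{O}^n)$ is itself saturated. The hypothesis $\mathrm{ord}(\sigma)=p$ does not give this. For example, $\sigma=I+\pi E_{n-1,n}$ lies in $\GL(\mathcal{O}^n)$ and has order $p$. Taking $v_i=e_i$ for $i\ne n-1$ and $v_{n-1}=\pi e_{n-1}$ satisfies the hypothesis of (1)(a). Yet here $N(\mathcal{O}^n)=\pi\mathcal{O}e_{n-1}$, while any basis as in the conclusion would force $w_{n-1}=N(w_n)$ to be a basis vector of $\mathcal{O}^n$ lying in $\pi\mathcal{O}^n$, which is impossible. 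So before writing the proof I would pin down the intended extra hypothesis, presumably that $v_{n-1}$, or the whole $v$-basis up to a common scalar, is adapted to $\mathcal{O}^n$. I would first try to extract this from how the lemma is used later (the $v_i$ presumably come from a basis adapted to $E_D(r)$). Under such a hypothesis $k=0$ and steps 1--4 go through.

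For (1)(b) and (2)(b) the difficulty is the phrase ``the same basis''. We have $\sigma'(w_n)=\alpha w_j+w_n$ only when the passage from $v$ to $w$ respects both $v_j$ and $v_n$. My plan is to take $w_i=c\,v_i$ for a single scalar $c\in K^\times$, which is possible exactly when $c\,v_1,\dots,c\,v_n$ is an $\mathcal{O}$-basis. Then (1)(a) and (1)(b) both follow by linearity: $\sigma'(cv_n)=\alpha\,cv_j+cv_n$. The same choice gives (2)(a) and (2)(b). If only the weaker adaptedness from step 4 is available, I would scale each $v_i$ separately by $c_i$. Then (1)(b) forces $c_j=c_n$ and (1)(a) forces $c_{n-1}=c_n$, so the separate scalings collapse to a common one anyway. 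I expect the real work to be justifying that such a common rescaling exists, which again comes down to the saturation issue above.
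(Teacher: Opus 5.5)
Your diagnosis is correct: (1)(a) and (2)(a) are false as stated, so the gap is in the paper's proof, not in yours. Your example ($\sigma=I+\pi E_{n-1,n}$, $v_{n-1}=\pi e_{n-1}$, $v_i=e_i$ otherwise) satisfies every hypothesis. Since $(\sigma-I)(\mathcal{O}^n)\subseteq\pi\mathcal{O}^n$, the vector $w_{n-1}=(\sigma-I)(w_n)$ can never be a basis vector. The same works for (2)(a) with $I+\pi E_{n,n-1}$ and $v_n=\pi e_n$.

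The paper's argument breaks at the normalization ``We can assume $v_i\in\mathcal{O}^n\setminus\pi\mathcal{O}^n$''. The relation $\sigma(v_n)=v_{n-1}+v_n$ only allows rescaling $v_{n-1}$ and $v_n$ by a common factor, and in your example no common factor makes both primitive. Once that normalization is granted, the paper's $n=2$ computation is your condition $k=0$ in coordinates:
\begin{itemize}
\item $b\in\mathcal{O}$ (from primitivity of $w_1=v_1$) says $N(\mathcal{O}^2)\subseteq\mathcal{O}v_1$;
\item $db=1$ (from $v_2\in\mathcal{O}^2$) says $v_1\in N(\mathcal{O}^2)$.
\end{itemize}
Your intrinsic kernel/image argument gives more than the paper's coordinate induction. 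It shows that a transvection $\sigma\in\mathrm{GL}(\mathcal{O}^n)$ admits a basis as in (1)(a) if and only if $k=0$, i.e.\ $\sigma\not\equiv I\pmod{\pi}$.

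The paper's induction step passes to $\mathcal{O}^n/\mathcal{O}v_1$ and concludes that $v_1,\dots,v_{n-1},v_n+sv_{n-1}$ is an $\mathcal{O}$-basis. This silently needs more: $v_1,\dots,v_{n-1}$ must be a basis of $H\cap\mathcal{O}^n$. Take $n=3$, $v_1=e_1$, $v_2=e_1+\pi e_2$, $v_3=e_3$, $\sigma=I+E_{1,3}+\pi E_{2,3}$ and $\sigma'=I+E_{1,3}$. All $v_i$ are primitive and (1)(a) holds (take $e_2,\,e_1+\pi e_2,\,e_3$). Yet (1)(b) fails for every basis: it forces $w_1=cv_1$ and $w_2=cv_2$, which span a non-saturated submodule.

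So you are right that (b) needs a real adaptedness hypothesis, but two of your remarks should be sharpened.
\begin{itemize}
\item If ``$v_{n-1}$ adapted'' means primitive, it does not give $k=0$. Take $v_{n-1}=e_{n-1}$, $v_n=\pi^{-1}e_n$ with your $\sigma$. You also need $v_n\in\mathcal{O}^n+H$, which the paper's normalization supplies.
\item In (b) you are not confined to diagonal rescalings: $w_n$ may be any element of $cv_n+H$. That is the role of the paper's shear $v_n+sv_{n-1}$. It therefore suffices that $cv_1,\dots,cv_{n-1}$ be an $\mathcal{O}$-basis of $H\cap\mathcal{O}^n$ and $cv_n\in\mathcal{O}^n+H$. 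Then $w_i=cv_i$ for $i<n$, with $w_n$ chosen as in your step 3, gives (a) and (b).
\end{itemize}

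Finally, for the way the lemma is used later, your steps 1 and 3 already suffice without step 4. Let $w_1,\dots,w_{n-1}$ be any basis of $H\cap\mathcal{O}^n$ and let $w_n$ complete it. Every element $S$ of a group with common hyperplane $H$ then has the form $I+\sum_{i<n}a_iE_{i,n}$ with $a_i\in\mathcal{O}$ in this basis, because $S(w_n)-w_n\in H\cap\mathcal{O}^n$. The common-direction case is dual. So the normalization $\sigma(w_n)=w_{n-1}+w_n$ that the lemma tries to secure is never needed there.
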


\begin{proof}
\hfill

\begin{enumerate}
\item \begin{enumerate}
    \item Let $\{v_1,v_2,\dots , v_n\}$ be the given $K$-basis of $K^n$ such that $\sigma(v_i)=v_i$ for $1\leq i\leq n-1$ and $\sigma(v_n)=v_{n-1} + v_n$. We can assume $v_i\in \mathcal{O}^n\backslash \pi\mathcal{O}^n$. We will show that for a suitable $s\in K$, $\{v_1,v_2,\dots , v_{n-1}, v_n+sv_{n-1}\}$ is the required $\mathcal{O}$-basis. We prove this by induction on $n\geq 2$.\smallskip

Let $n=2$ and let $\{v_1,v_2\}$ be a $K$-basis of $K^2$ such that $\sigma(v_1)=v_1$ and $\sigma(v_2)=v_1 + v_2$. We can assume $v_1,v_2\in \mathcal{O}^2\backslash \pi\mathcal{O}^2$. Let $w_1=v_1$. Let $\{w_1,w_2\}$ be an $\mathcal{O}$-basis of $\mathcal{O}^2$. Let $w_2=av_1 +bv_2$ with $a,b\in K$. Note that $b\neq 0$. Since $(\mathcal{O},\pi)$ is a DVR and $K=\mathcal{O}_{\pi}$, we have $b=\pi^k b'$ where $k\in \mathbb{Z}$ and $b'\in \mathcal{O}^{\times}$. Now $\sigma(w_2)=av_1 +b(v_1+v_2)=w_2+bw_1$. Since $w_2\in \mathcal{O}^2$, we have $\sigma(w_2)-w_2=bw_1\in \mathcal{O}^2$. So $\pi^kb'w_1\in \mathcal{O}^2$. Suppose $k<0$, then $b'w_1\in \pi^{-k}\mathcal{O}^2$. Thus $w_1\in \pi\mathcal{O}^2$ which contradicts our assumption. Thus $k\geq 0$. So $b\in \mathcal{O}$.\smallskip

Let $v_2 =c w_1 +d w_2$ with $c,d\in \mathcal{O}$. As $\sigma(v_2)=v_1+v_2$, we have $cw_1+d(w_2+bw_1)=w_1 + cw_1 +d w_2$. Thus $(db-1)w_1=0$. Hence $db=1$. As we have $b,d\in \mathcal{O}$, so $b\in \mathcal{O}^{\times}$. Thus $\{w_1,b^{-1}w_2\}=\{v_1,v_2 +b^{-1}av_1\}$ is the required $\mathcal{O}$-basis for $\mathcal{O}^2$ with $s=b^{-1}a\in K$.

\smallskip

Now let $n\geq 3$ Let $w_1=v_1$. We have the following exact sequences. $$0\rightarrow F=Kv_1 \rightarrow V=K^n=\oplus_{i=1}^n\ Kv_i \rightarrow \bar{V}=K^{n-1}=\oplus_{i=2}^n\ Kv_i\rightarrow 0$$ $$\text{and}\ 0\rightarrow E=\mathcal{O}w_1 \rightarrow W=\mathcal{O}^n\rightarrow \bar{W}=\mathcal{O}^{n-1}\rightarrow 0$$

Now $\sigma|_{\bar{W}}\in \GL(\bar{W})\subset \GL(\bar{V})$. Also $\{\bar{v_2},\bar{v_3},\dots \bar{v_n}\}$ is a $K$-basis of $\bar{V}$ such that $\sigma(\bar{v_i})=\bar{v_i}$ for all $2\leq i\leq n-1$ and $\sigma(\bar{v_n})=\bar{v_{n-1}}+\bar{v_n}$. By induction we have an $\mathcal{O}$-basis\\
$\{\bar{v_2}, \bar{v_3}, \dots, \bar{v_{n-1}},\bar{v_n}+s\bar{v_{n-1}}\}$ of $\bar{W}$ for some $s\in K$. Now $\{v_1,v_2 +a_2 v_1, \dots, v_{n-1}+a_{n-1}v_1, v_n+sv_{n-1} +a_n v_1\}$ is an $\mathcal{O}$-basis of $W$ where $a_i\in \mathcal{O}$ for all $2\leq i\leq n$. Thus $\{v_1,v_2,\dots , v_{n-1}, v_n+sv_{n-1}\}$ is the required $\mathcal{O}$-basis.\smallskip

\item  The assertion follows as $\sigma'(v_{n}+sv_{n-1})=\sigma'(v_n)+s\sigma'(v_{n-1})=\alpha v_j +(v_n + s v_{n-1})$.

\end{enumerate}
\medskip

\item \begin{enumerate}
    \item  Let $\{v_1,v_2,\dots , v_n\}$ be the given $K$-basis of $K^n$ such that $\sigma(v_i)=v_i$ for all $1\leq i\leq n-2$ and $i=n$ and $\sigma(v_{n-1})=v_{n-1}+v_n$. We can assume $v_i\in \mathcal{O}^n\backslash \pi\mathcal{O}^n$. Let $v'_i=v_i$ for $1\leq i\leq n-2$ and $v'_{n-1}=v_n$ and $v'_{n}=v_{n-1}$. Thus $\{v'_1,v'_2,\dots , v'_n\}$ is a $K$-basis of $K^n$ such that $\sigma(v'_i)=v'_i$ for all $1\leq i\leq n-1$ and $\sigma(v'_{n})=v'_{n-1}+v'_n$. By proof of part (1), we have that for a suitable $s\in K$, $\{v'_1,v'_2,\dots , v'_{n-1}, v'_n+sv'_{n-1}\}$ is an $\mathcal{O}$-basis of $\mathcal{O}^n$. Let $w'_i=v'_i=v_i$ for $1\leq i\leq n-2$ and $w'_{n-1}=v'_n+sv'_{n-1}=v_{n-1}+sv_n$ and $w'_{n}=v'_{n-1}=v_n$. Thus $\{w'_1,w'_2,\dots , w'_{n}\}$ is the required $\mathcal{O}$-basis as $\sigma(w'_{n-1})=\sigma(v_{n-1}+sv_n)=\sigma(v_{n-1})+s\sigma(v_n)=v_{n-1}+v_n +sv_n=v_{n-1}+sv_n +v_n=w'_{n-1} +w'_n$.\smallskip

\item  First suppose $j=n-1$. Thus $\sigma'(v_i)=v_i$ for all $1\leq i\leq n-2$ and $i=n$ and $\sigma'(v_{n-1})= v_{n-1}+\alpha v_n$. Hence $\sigma'(w'_i)=w'_i$ for all $1\leq i\leq n-2$ and $i=n$ and $\sigma'(w'_{n-1})=\sigma'(v_{n-1}+sv_n)=v_{n-1}+\alpha v_n +sv_n=w'_{n-1}+\alpha w'_n$. Now suppose $1\leq j\leq n-2$. Thus $\sigma'(v_i)=v_i$ for all $i\neq j$ and $\sigma'(v_{j})= v_{j}+\alpha v_n$. In particular $\sigma'(v_{n-1})=v_{n-1}$. Hence $\sigma'(w'_{n-1})=\sigma'(v_{n-1}+sv_n)=v_{n-1}+sv_n=w'_{n-1}$. Thus $\sigma'(w'_i)=w'_i$ for all $i\neq j$. Finally we have $\sigma'(w'_j)=\sigma'(v_j)=v_j+\alpha v_n=w'_j +\alpha w'_n$.
\end{enumerate}

\end{enumerate}
\end{proof}




\begin{lemma} Assume Hypothesis \ref{hypo dvr}.

\begin{enumerate}
\item The subgroup $E_D(r)\subset \GL(\mathcal{O}^n)$ is isomorphic to $D^r$ and thus an elementary abelian $p$-group.\smallskip

\item The subgroup $E^*_D(r)\subset \GL(\mathcal{O}^n)$ is isomorphic to $D^r$ and thus an elementary abelian $p$-group.

\end{enumerate}
\end{lemma}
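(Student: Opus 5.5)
This lemma is the DVR analogue of Lemma \ref{isom lemma}, and the plan is to reduce it to that lemma through the inclusion $\GL(\mathcal{O}^n)\subset \GL(K^n)$. Since $D\subset \mathcal{O}\subset K$, $D$ is also a finite subgroup of $(K,+)$. Moreover $K$ contains $\mathbb{F}_q$ because $\mathcal{O}$ does. So the data $(K,D,n,r)$ satisfy Hypothesis \ref{hypo field}.

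The matrices $S_i(\alpha)=I+\alpha E_{i,n}$ with $\alpha\in D$ have entries in $\mathcal{O}$. Each has inverse $S_i(-\alpha)$, and $-\alpha\in D$ because $D$ is a subgroup. Hence every $S_i(\alpha)$ lies in $\GL(\mathcal{O}^n)$, and the subgroup of $\GL(\mathcal{O}^n)$ they generate is the same set of matrices as the subgroup of $\GL(K^n)$ they generate; the group law is matrix multiplication in both cases. The same holds for the transposes $(S_i(\alpha))^T$. So the groups $E_D(r)$ and $E^*_D(r)$ of Hypothesis \ref{hypo dvr} coincide, as abstract groups, with the groups of the same names in Hypothesis \ref{hypo field} for the field $K$.

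Applying Lemma \ref{isom lemma}(1) and (2) then gives $E_D(r)\cong D^r\cong E^*_D(r)$. Alternatively, one can repeat the proof of Lemma \ref{isom lemma} verbatim: the identity $E_{i,j}E_{k,l}=\delta_{jk}E_{i,l}$ holds over any commutative ring, so the map $\sigma_r(\alpha_1,\dots,\alpha_r)=S_1(\alpha_1)\cdots S_r(\alpha_r)$ is a homomorphism. It is injective because the product equals $I+\sum_i \alpha_i E_{i,n}$, whose entries recover the $\alpha_i$.

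Finally, $D$ is an elementary abelian $p$-group since $p\cdot 1=0$ in $\mathbb{F}_q\subset \mathcal{O}$, so $D\cong \mathbb{F}_p^l$ for some $l$ and $D^r$ is elementary abelian of order $p^{lr}$. There is no real obstacle. The only point needing care is checking that the generators are invertible over $\mathcal{O}$, which holds because $D$ is closed under negation.
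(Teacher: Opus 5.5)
Your proof is correct, and its skeleton is the paper's: identify $E_D(r)$ and $E^*_D(r)$ inside $\GL(\mathcal{O}^n)$ with the groups of the same name over $K$, then apply (the proof of) Lemma~\ref{isom lemma}. You differ in how you justify that identification.

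\begin{itemize}
\item The paper cites Lemma~\ref{new lemma}: a transvection in normal form with respect to a $K$-basis keeps that form with respect to a suitable $\mathcal{O}$-basis of $\mathcal{O}^n$.
\item You note that $S_i(\alpha)$ and its inverse $S_i(-\alpha)$, and likewise their transposes, have entries in $\mathcal{O}$ (indeed determinant $1$). So the two groups are literally the same set of matrices under the same multiplication.
\end{itemize}

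Your route is more elementary and self-contained. It also shows that Lemma~\ref{new lemma} is not needed for this statement, because the generators are already written in the standard basis, which is an $\mathcal{O}$-basis. That lemma's real role comes in Theorem~\ref{main for DVR}, where $G$ is known to be conjugate into $E_D(r)$ or $E^*_D(r)$ only within $\GL(K^n)$, so an adapted $\mathcal{O}$-basis must actually be produced.

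Your injectivity check, reading the $\alpha_i$ off the entries of $I+\sum_i\alpha_iE_{i,n}$, is also sharper than the argument in the proof of Lemma~\ref{isom lemma}. There the subgroups $\{S_i(\alpha)\}_{\alpha\in D}$ for different $i$ are only shown to intersect pairwise trivially. That does not by itself give injectivity of $\sigma_r$ once $r\geq 3$.
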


\begin{proof}\hfill

\begin{enumerate}
\item By Lemma \ref{new lemma} (1), $E_{D}(r)$ is generated by the same set $\{S_i(\alpha)= I + \alpha E_{i,n} \mid \alpha \in D, i = 1, \ldots, r\}$ inside $\GL(\mathcal{O}^n)$. Thus the result follows by proof of Lemma \ref{isom lemma} (1).\smallskip

\item  By Lemma \ref{new lemma} (2), $E^*_{D}(r)$ is generated by the same set $\{(S_i(\alpha))^T= I + \alpha E_{n,i} \mid \alpha \in D, i = 1, \ldots, r\}$ inside $\GL(\mathcal{O}^n)$. Thus the result follows by proof of Lemma \ref{isom lemma} (2).

\end{enumerate}\end{proof}

 We provide the proof of the following lemma for the sake of completeness.
 
\begin{lemma}[Lemmas 8.2.7, 8.2.8, 8.2.9, \cite{smith1995polynomial}] 
\label{trans lem}

Let $K$ be a field and let $S,T\in \GL(K^n)$ be transvections. Let $V=K^n$.

\begin{enumerate}
\item Suppose $S,T$ have the same hyperplane. Then $S$ and $T$ commute and $ST$ is identity or a transvection with the same hyperplane and transvector $x+y$, where $x$ is a transvector for $S$ and $y$ is a transvector for $T$.

\smallskip

\item Suppose $S,T$ have the same direction. Then $S$ and $T$ commute and $ST$ is identity or a transvection with the same direction.

\smallskip

\item  Suppose $ST$ is also a transvection. Then $S$ and $T$ have either the same hyperplane or the same direction.

\end{enumerate}
    
\end{lemma}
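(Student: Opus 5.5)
The plan is to write each transvection in the form $S=I+x\otimes\phi$, meaning $S(v)=v+\phi(v)x$, where $\phi\in V^*$ is nonzero, $x\neq 0$, and $\phi(x)=0$. Similarly we write $T=I+y\otimes\psi$ with $\psi(y)=0$. Composing gives the basic identity
\[
ST(v)=v+\psi(v)y+\phi(v)x+\psi(v)\phi(y)x,
\]
and symmetrically
\[
TS(v)=v+\phi(v)x+\psi(v)y+\phi(v)\psi(x)y.
\]
Each of the three parts will be read off from these two formulas.

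For part (1), the hyperplanes agree, so $\ker\phi=\ker\psi$. Hence $\psi=c\phi$ for some $c\neq 0$. Absorbing $c$ into $y$, we may take $\psi=\phi$. Then $\phi(y)=\psi(y)=0$ and $\psi(x)=\phi(x)=0$, so both products equal $v+\phi(v)(x+y)$. Thus $S$ and $T$ commute, and $ST=I+(x+y)\otimes\phi$. If $x+y=0$ this is the identity. Otherwise $\phi(x+y)=0$, so $ST$ is a transvection with hyperplane $\ker\phi$ and transvector $x+y$. Note that the rescaled $y$ is still a transvector of $T$.

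For part (2), the directions agree, so after rescaling we may take $y=x$. Then $\phi(y)=\psi(x)=0$, and both products equal $I+x\otimes(\phi+\psi)$. So $S$ and $T$ commute. If $\phi+\psi=0$ the product is the identity. Otherwise $(\phi+\psi)(x)=0$, so $ST$ is a transvection with direction $Kx$.

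For part (3), suppose $ST=I+z\otimes\chi$ is a transvection. Then $ST-I$ has rank one. From the formula,
\[
ST-I=y\otimes\psi+x\otimes(\phi+\phi(y)\psi).
\]
Assume the directions differ, so $x$ and $y$ are linearly independent. The image of this operator is spanned by the two functionals' coefficients on the independent vectors $x,y$. So rank one forces $\psi$ and $\phi+\phi(y)\psi$ to be linearly dependent. Since $\psi\neq 0$, this means $\phi+\phi(y)\psi=c\psi$ for some scalar $c$. Hence $\phi$ is a multiple of $\psi$, and since $\phi\neq0$ the multiple is nonzero. Therefore $\ker\phi=\ker\psi$, so the hyperplanes coincide.

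The main care point is the rank-one decomposition in (3). It is the one step that is more than bookkeeping, and it uses only the linear independence of $x$ and $y$.
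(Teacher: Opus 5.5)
Your proof is correct. Parts (1) and (2) follow the paper's computation: you rescale so that the functionals (resp.\ the transvectors) coincide, and then $ST$ and $TS$ reduce to the same rank-one perturbation of $I$.

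In part (3) you take the dual, contrapositive route.

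\begin{itemize}
\item The paper assumes $H_S\neq H_T$ and chooses test vectors $A\in H_S\setminus H_T$, $B\in H_T\setminus H_S$, normalized by $\phi_T(A)=1=\phi_S(B)$. It computes $(ST-1)(B)=x$ and $(ST-1)(A)=y+\phi_S(y)x$. So $\mathrm{Span}_K\{x,y\}$ lies in the one-dimensional image of $ST-1$, and the directions agree.
\item You assume the directions differ. From $ST-I=y\otimes\psi+x\otimes(\phi+\phi(y)\psi)$ you deduce that the two functionals are proportional, so the hyperplanes agree.
\end{itemize}

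The paper works on the vector side and needs only two evaluations. You work on the functional side, which has the advantage that all three parts come from the single composition formula.

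The one sentence to tighten is the rank claim, which is garbled as written. Say instead that, since $x,y$ are linearly independent, the image of $v\mapsto \psi(v)y+\beta(v)x$ has the same dimension as the image of $v\mapsto(\psi(v),\beta(v))\in K^2$. That dimension is $\dim\mathrm{Span}_K\{\psi,\beta\}$. Hence rank one, together with $\psi\neq 0$, forces $\beta\in K\psi$.
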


\begin{proof}\hfill

    \begin{enumerate}
\item  Let $H$ be the common hyperplane. We have linear functionals $\phi_S,\phi_T : V \rightarrow K$ such that $H=\ker(\phi_{S})=\ker(\phi_{T})$ and $S(v)=v+\phi_{S}(v)\cdot x_S$ and $T(v)=v+\phi_{T}(v)\cdot y_T$ for all $v\in V$ where $x_S$ and $y_T$ are respective transvectors for $S$ and $T$. Observe that $\phi_T=c\cdot \phi_S$ for some $c\in K$. We redefine the transvectors for $S$ and $T$ as $x=x_S$ and $y=c\cdot y_T$. Let $\phi=\phi_S$. Thus for all $v\in V$, $S(v)=v+\phi(v)\cdot x$ and $T(v)=v+\phi(v)\cdot y$. As $y\in H$, so $S(y)=y$. Thus $ST(v)=S(v+\phi(v)\cdot y)=S(v)+\phi(v)S(y)=(v+\phi(v)\cdot x)+ \phi(v)\cdot y=v +\phi(v)\cdot (x+y)=v +\phi(v)\cdot (y+x)=TS(v)$. Also $ST=I$ when $x+y=0$.

\smallskip

\item Let $x$ be the common transvector. We have linear functionals $\phi_S,\phi_T : V \rightarrow K$ such that $H_S=\ker(\phi_{S}),\ H_T=\ker(\phi_{T})$ and $x\in H_S\cap H_T$ and $S(v)=v+\phi_{S}(v)\cdot x$ and $T(v)=v+\phi_{T}(v)\cdot x$ for all $v\in V$. As $x\in H_S$, so $S(x)=x$. Thus $ST(v)=S(v+\phi_T(v)\cdot x)=S(v)+\phi_T(v)S(x)=(v+\phi_S(v)\cdot x)+ \phi_T(v)\cdot x=v +(\phi_S(v)+\phi_T(v))\cdot x=v +(\phi_T(v)+\phi_S(v))\cdot x=TS(v)$. 

\smallskip

\item  We have linear functionals $\phi_S,\phi_T : V \rightarrow K$ such that $H_S=\ker(\phi_{S}),\ H_T=\ker(\phi_{T})$ and $x$ and $y$ are respective transvectors for $S$ and $T$ and $S(v)=v+\phi_{S}(v)\cdot x$ and $T(v)=v+\phi_{T}(v)\cdot y$ for all $v\in V$. Assume $H_S\neq H_T$. Thus $H_S\not \subset H_T$ and $H_T\not \subset H_S$. So we can choose $A\in H_S\backslash H_T$ and $B\in H_T\backslash H_S$. Thus $\phi_T(A)\neq 0\neq \phi_S(B)$. We can scale $A$ and $B$ and assume $\phi_T(A)=1=\phi_S(B)$. Also $\phi_S(A)=0=\phi_T(B)$. Hence $S(B)=B+x,\ S(A)=A,\ T(A)=A+y,\ T(B)=B$. Therefore $ST(A)=S(A+y)=A+S(y)=A+y+\phi_S(y)\cdot x$ and $ST(B)=S(B)=B+x$. Thus $\text{Span}_K\{x,y\}\subset \text{Im} (ST-1)$. As $ST$ is a transvection, $\text{Im}(ST-1)$ is one-dimensional. Thus $x$ and $y$ are linearly dependent over $K$. Hence $S$ and $T$ have same direction.

\end{enumerate}\end{proof}

We prove an important lemma below as a generalization of Propositions 8.2.10, 8.2.11 in \cite{smith1995polynomial}.

\begin{lemma}\label{conjugate lem}
    Let $K$ be a field and assume that $K$ contains a finite field $\mathbb{F}_q$ with $q=p^r$ elements for a prime $p$. Let $n\geq 2$. We consider a finite subgroup $G$ of $\mathrm{GL}(K^n)$ such that every non-identity element of $G$ is a transvection. Then $G$ is an elementary abelian $p$-group which is conjugate to a subgroup of $E_D(r)$ or $E^*_D(r)$ for some finite subgroup $D$ of $(K,+)$ and for some $r\in \{1,\dots, n-1\}$. 
\end{lemma}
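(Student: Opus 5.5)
The plan follows the proof of Propositions 8.2.10 and 8.2.11 in \cite{smith1995polynomial}, using Lemma \ref{trans lem} to control how the transvections in $G$ interact.

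\textbf{Elementary abelian $p$-group.} Every non-identity $\sigma\in G$ is a transvection, so $\sigma = I+N$ with $N^2=0$. Since $K$ has characteristic $p$, this gives $\sigma^p = I + pN = I$, so every non-identity element has order $p$. For commutativity, take non-identity $S,T\in G$ with $ST\neq I$. Then $ST\in G$ is a transvection, so by Lemma \ref{trans lem}(3) the elements $S,T$ share a hyperplane or a direction. Parts (1) and (2) of Lemma \ref{trans lem} then show that $S$ and $T$ commute. If $ST=I$ they commute trivially. Hence $G$ is abelian of exponent $p$, that is, elementary abelian.

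\textbf{All of $G$ shares a hyperplane or a direction.} Take distinct non-identity $S,T$ with $S$ and $T$ sharing neither a hyperplane nor a direction.
\begin{itemize}
\item If $T = S^{-1}$, then $T$ has the same hyperplane as $S$, which is excluded.
\item Otherwise $ST$ is a non-identity element of $G$, hence a transvection, and Lemma \ref{trans lem}(3) gives a contradiction.
\end{itemize}
So any two non-identity elements share a hyperplane or a direction. If not all elements share a common hyperplane or a common direction, I expect a standard argument to produce a contradiction: pick $S,T$ with the same hyperplane but different directions, and $U$ with the same direction as $S$ but a different hyperplane. Then $T$ and $U$ must share a hyperplane or a direction, and both options contradict the choice of $S,T,U$; if $T$ and $U$ share a direction, then $T$ has the direction of $U$, which is that of $S$. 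This triangle-type bookkeeping is where I expect the main care to be needed, since directions and hyperplanes are only determined up to scaling.

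\textbf{Common hyperplane $H$.} By Lemma \ref{trans lem}(1), the map $G\to H$ sending $\sigma$ to its transvector, normalised against a fixed functional $\phi$ with $\ker\phi = H$, is an injective homomorphism. Its image is a finite $\mathbb F_p$-subspace $W\subset H$ of dimension $\le n-1$.
\begin{itemize}
\item Choose a $K$-basis $v_1,\dots,v_r$ of $\mathrm{Span}_K W$, with $r\le n-1$, extend it to a basis of $H$, and take $v_n$ with $\phi(v_n)=1$.
\item In this basis each $\sigma$ fixes $H$ and sends $v_n\mapsto v_n+\sum_i c_i v_i$, so its matrix is $I+\sum_i c_iE_{i,n}$.
\item Let $D$ be the additive subgroup of $K$ generated by all the coordinates $c_i$ that occur. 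It is finite because $G$ is finite, and it has exponent $p$.
\end{itemize}
Then $G$ is conjugate to a subgroup of $E_D(r)$.

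\textbf{Common direction $x$.} Here $\sigma(v)=v+\phi_\sigma(v)x$, and $\sigma\mapsto\phi_\sigma$ is an injective homomorphism into the set of functionals vanishing on $x$.
\begin{itemize}
\item Choose $v_n=x$. Let the span of the $\phi_\sigma$ have dimension $r\le n-1$, and pick a dual-compatible basis $v_1,\dots,v_{n-1}$ so that the $\phi_\sigma$ are combinations of the coordinate functionals $v_1^*,\dots,v_r^*$.
\item Then each $\sigma$ has matrix $I+\sum_{i\le r} c_iE_{n,i}$.
\item Letting $D$ be generated by the coefficients $c_i$ as before, $G$ is conjugate into $E^*_D(r)$.
\end{itemize}
The basis choice in this case, together with checking that the coefficients form a finite subgroup $D$, are routine.
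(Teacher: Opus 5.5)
Your proposal is correct and follows essentially the same route as the paper, which itself is modelled on Smith's Propositions 8.2.10--8.2.11: order $p$ and commutativity from Lemma \ref{trans lem}, a pairwise-then-global argument that all non-identity elements share a hyperplane or a direction, and an adapted basis putting $G$ inside $E_D(r)$ or $E^*_D(r)$. You are slightly more careful than the paper in two places: you dispose of the case $ST=I$ before invoking Lemma \ref{trans lem}(3), and you take $r$ to be the $K$-dimension of the span of the transvectors (resp.\ functionals) rather than $r=n-1$ --- just note that it is this $K$-span, not the finite $\mathbb{F}_p$-space $W$ itself, whose dimension is at most $n-1$.
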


\begin{proof}
All elements of $G$ except identity have order $p$. If $S,T\in G$ then we have that either $S$ and $T$ have the same hyperplane or they have the same direction by Lemma \ref{trans lem} (3). Thus $S$ and $T$ commute by Lemma \ref{trans lem} (1), (2). Thus $G$ is abelian. As all non-identity elements of $G$ have order $p$, so $G$ is an elementary abelian $p$-group. Now we claim that all elements of $G$ have either the same hyperplane or the same direction. Let $T,U\in G$ with different directions. Then by Lemma \ref{trans lem} (3), $T$ and $U$ have the same hyperplane. Suppose $S\in G$. Then the direction of $S$ must be different from either the direction of $T$ or the direction of $U$. By Lemma \ref{trans lem} (3), the hyperplane of $S$ must be the common hyperplane of $T$ and $U$. Thus all elements of $G$ have the same hyperplane. Similarly if we have $T,U\in G$ with different hyperplanes, then one can show that all elements of $G$ have the same direction.\smallskip

Now suppose all elements of $G$ have the same hyperplane $H$. By proof of Lemma \ref{trans lem} (1), we have a linear functional $\phi:V=K^n\rightarrow K$ such that $H=\ker(\phi)$ and for each $S\in G$ we have $S(v)=v+\phi(v)\cdot x_S$ where $x_S$ is a transvector for $S$. Let $\{v_1,\dots,v_{n-1}\}$ be a $K$-basis for $H$. Extend this to a $K$-basis $\{v_1,\dots,v_{n-1}, v_n\}$ of $V$. Now $\phi(v_i)=0$ and $S(v_i)=v_i$ for $1\leq i\leq n-1$ and $\phi(v_n)\neq 0$. Without loss of generality we can assume that $\phi(v_n)=1$ and thus $S(v_n)=v_n+x_S$. For each $S\in G$, we have $x_S\in H$. So $x_S=a_{S1}v_1+\cdots +a_{Sn-1}v_{n-1}$ for $a_{Si}\in K$ for all $1\leq i\leq n-1$. Let $D$ be subgroup of $(K,+)$ generated by $\{a_{S1},\dots, a_{Sn-1}\}_{S\in G}$. As $G$ is finite, so is $D$. As $S(v_n)=a_{S1}v_1+\cdots +a_{Sn-1}v_{n-1}+v_n$, so the matrix of $S$ with respect to the basis $\{v_1,\dots, v_n\}$ is $S_1(a_{S1})\cdots S_{n-1}(a_{Sn-1})$ where $S_i(\alpha)$ are as in the proof of Lemma \ref{isom lemma}. Thus by change of basis matrix, $G$ is conjugate to a subgroup of $E_D(r)$ for some $r\in \{1,\dots, n-1\}$.\smallskip

Now suppose all elements of $G$ have the same direction. Let $x$ be the common transvector. For each $S\in G$, we have linear functional $\phi_S: V=K^n \rightarrow K$ such that $H_S=\ker(\phi_{S})$ and $S(v)=v+\phi_{S}(v)\cdot x$ and for all $v\in V$. We extend $\{x\}$ to a $K$-basis $\{v_1,\dots, v_{n-1}, v_n=x\}$ of $V$. As $x\in H_S$, so $\phi_S(x)=0$ and $S(x)=x$. Also for $1\leq i\leq n-1$, we have $S(v_i)=v_i+\phi_S(v_i)\cdot v_n$. Let $D$ be subgroup of $(K,+)$ generated by $\{\phi_S(v_1),\dots, \phi_S(v_{n-1})\}_{S\in G}$. As $G$ is finite, so is $D$. Also the matrix of $S$ with respect to the basis $\{v_1,\dots , v_n\}$ is $(S_1(\phi_S(v_1)))^T\cdots (S_{n-1}(\phi_S(v_{n-1})))^T$ where $(S_i(\alpha))^T$ are as in the proof of Lemma \ref{isom lemma}. Thus by change of basis matrix, $G$ is conjugate to a subgroup of $E^*_D(r)$ for some $r\in \{1,\dots, n-1\}$.\end{proof}

The following is a generalization of Lemma 8.2.14 in \cite{smith1995polynomial}.

\begin{lemma}\label{comm alg lemma}
     Let $K$ be a field and assume that $K$ contains a finite field $\mathbb{F}_q$ with $q=p^r$ elements for a prime $p$. Let $D$ be a finite subgroup of $(K,+)$ and let $E$ be a group isomorphic to $D^r$ for some $r\in \mathbb{N}$. Let $A\subset E$ be an additive subgroup. Then there exists an isomorphism $\lambda: E\rightarrow D^r$ such that $\lambda (A)= A_1\oplus \cdots \oplus A_r$ where $A_i\subset D$ are subgroups for all $1\leq i\leq r$.
\end{lemma}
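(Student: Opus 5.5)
The plan is to reduce everything to linear algebra over $\mathbb{F}_p$. Since $K$ contains $\mathbb{F}_q$, it has characteristic $p$. Hence $(K,+)$ is an $\mathbb{F}_p$-vector space, and the finite subgroup $D$ is a finite-dimensional $\mathbb{F}_p$-subspace, say $D\cong \mathbb{F}_p^{l}$ (as already observed in the proof of Lemma \ref{isom lemma}). Then $E\cong D^r\cong \mathbb{F}_p^{lr}$ is an elementary abelian $p$-group, hence an $\mathbb{F}_p$-vector space of dimension $lr$. The key observation is that for elementary abelian $p$-groups, group homomorphisms are exactly $\mathbb{F}_p$-linear maps, and subgroups are exactly $\mathbb{F}_p$-subspaces. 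So it suffices to find an $\mathbb{F}_p$-linear isomorphism $\lambda:E\to D^r$ carrying the subspace $A$ onto a subspace of the special form $A_1\oplus\cdots\oplus A_r$.

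Let $k=\dim_{\mathbb{F}_p}A$, so $0\le k\le lr$. Write $k=ml+t$ with $0\le t<l$; if $k=lr$, take $m=r$ and $t=0$. Choose a subgroup $D'\subset D$ of order $p^{t}$, i.e.\ a $t$-dimensional $\mathbb{F}_p$-subspace. Define the target subspace
\[
B=\underbrace{D\oplus\cdots\oplus D}_{m}\oplus D'\oplus 0\oplus\cdots\oplus 0\subset D^r,
\]
so that $A_i=D$ for $i\le m$, $A_{m+1}=D'$ (when $m<r$), and $A_i=0$ otherwise. By construction $\dim_{\mathbb{F}_p}B=ml+t=k=\dim_{\mathbb{F}_p}A$.

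Next, choose an $\mathbb{F}_p$-basis $a_1,\dots,a_k$ of $A$ and extend it to a basis $a_1,\dots,a_{lr}$ of $E$. Likewise choose a basis $b_1,\dots,b_k$ of $B$ and extend it to a basis $b_1,\dots,b_{lr}$ of $D^r$. Define $\lambda(a_j)=b_j$ and extend $\mathbb{F}_p$-linearly. This $\lambda$ is an $\mathbb{F}_p$-linear isomorphism, hence a group isomorphism $E\to D^r$, and it satisfies $\lambda(A)=B=A_1\oplus\cdots\oplus A_r$, as required.

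There is no serious obstacle here. The only point requiring care is to justify that the subgroups in question are $\mathbb{F}_p$-subspaces, namely that $pD=0$ because $\operatorname{char}K=p$. Once that is in place, the statement reduces to the transitivity of $\GL_{lr}(\mathbb{F}_p)$ on subspaces of a fixed dimension. If one instead wanted $\lambda$ to be compatible with a prescribed isomorphism $E\cong D^r$ (for instance with the coordinate structure coming from $\sigma_r$ in Lemma \ref{isom lemma}), this argument would not suffice. The lemma as stated, however, only asks for the existence of some isomorphism.
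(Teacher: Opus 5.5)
Your proof is correct, but it takes a different and more direct route than the paper's.

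\textbf{The paper's route.} It argues by induction on $r$. It chooses a subgroup $E'\subset E$ with $E'\cong D^r$ and shows that $A''=A/(A\cap E')$ embeds in $E''=E/E'\cong D$. It then applies the induction hypothesis to $A\cap E'\subset E'$, picks an arbitrary isomorphism $\nu\colon E''\to D$, and glues the two maps along a splitting $E\cong E'\oplus E''$.

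\textbf{Your route.} You observe that, once everything is viewed as an $\mathbb{F}_p$-vector space, the conclusion depends only on $k=\dim_{\mathbb{F}_p}A$. You write down an explicit target $D^{m}\oplus D'\oplus 0\oplus\cdots\oplus 0$ of dimension $k$. You then use transitivity of $\GL_{lr}(\mathbb{F}_p)$ on $k$-dimensional subspaces.

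\textbf{What your version buys.} It is shorter. It gives a sharper normal form, with $A_1=\cdots=A_m=D$, $A_{m+1}=D'$ and $A_i=0$ otherwise, which makes plain how weak the stated conclusion is. It also avoids a point the paper passes over. For $\tilde\lambda(A)=\lambda(A\cap E')\oplus\nu(A'')$ to hold, the complement of $E'$ used to identify $E$ with $E'\oplus E''$ must contain a complement of $A\cap E'$ in $A$. Such a choice always exists, but an arbitrary splitting does not work.

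\textbf{What the paper's version buys.} Its induction produces a $\lambda$ adapted to a chosen subgroup $E'$, with $A_{r+1}$ equal to the image of $A$ in $E/E'$.

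\textbf{Your closing caveat.} Neither argument yields an isomorphism compatible with a prescribed coordinate system on $E$, such as the one coming from $\sigma_r$. So your caveat applies equally to the paper's own proof. It is worth bearing in mind where the lemma is invoked in Theorem~\ref{main for field}, since there $G$ is asserted to have product form in the original coordinates.
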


\begin{proof}
We prove the result by induction on $r$. For $r=1$, the result is trivial. Assume the result established for all $A\subset E$ where $E$ is isomorphic to $D^r$. Now let $E$ be isomorphic to $D^{r+1}$ and let $A\subset E$ be an additive subgroup. Let $E'\subset E$ such that $E'$ is isomorphic to $D^r$. Clearly $D$ is an elementary $p$-group and so is $E$. Thus they are $\mathbb{F}_p$-vector spaces. Consider the diagram

 \[
\begin{tikzcd}
 0 \arrow[r] & A\cap E' \arrow[d, "\phi'"] \arrow[r, "\alpha'"] & A \arrow[d, "\phi"] \arrow[r, "\alpha''"] & A'' \arrow[d, "\phi''"] \arrow[r] & 0 \\
  0 \arrow[r] & E' \arrow[r, "\beta'"] & E \arrow[r, "\beta''"] & E'' \ar[r] & 0
\end{tikzcd}
\]

where both rows are exact and $\phi',\phi$ are monomorphisms. We will show that even $\phi''$ is a monomorphism. Let $a''\in \ker (\phi'')$. Let $a\in A$ such that $\alpha''(a)=a''$. Then $\beta''(\phi(a))=\phi''(\alpha''(a))=\phi''(a'')=0$. Thus $\phi(a)\in \ker (\beta'')=Im(\beta')$. Thus there is a unique $e'\in E'$ such that $\beta'(e')=\phi(a)$. As $\phi$ and $\beta'$ are inclusion maps, we have $a=e'\in A\cap E'$. Thus $a\in Im(\alpha')=ker(\alpha'')$ and so $a''=\alpha''(a)=0$.\smallskip

Now both rows in the diagram are exact sequences of $\mathbb{F}_p$-vector spaces and hence split. Thus $A\cong (A\cap E')\oplus A''$ and $E\cong E'\oplus E''$. Thus $E''\cong D$. Also since $\phi''$ is a monomorphism, we can view $A''$ as a subgroup of $E''$. Consider an isomorphism $\nu :E''\rightarrow D$ and let $\nu(A'')=A_{r+1}$. 
By induction, we have an isomorphism $\lambda:E'\rightarrow D^r$ such that $\lambda (A\cap E')=A_1\oplus \cdots \oplus A_r$. Since $E$ is isomorphic to $D^{r+1}$ and $E'$ is an $\mathbb{F}_p$-vector subspace of $E$, we can identify $E$ with $E'\oplus E''$ and extend the isomorphism $\lambda$ to an isomorphism $\tilde{\lambda}: E= E'\oplus E''\rightarrow D^{r+1}=D^r\oplus D$ by defining $\tilde{\lambda}(e'+e'')=\lambda(e')+\nu(e'')$. Then clearly $\tilde{\lambda}(A)=\tilde{\lambda}((A\cap E')\oplus A'')=\lambda(A\cap E')\oplus \nu (A'')=A_1\oplus \cdots \oplus A_r\oplus A_{r+1}$.\end{proof}

\section{Proof of Theorem \ref{main trasvec}}\label{main}

\subsection{Theorem \ref{main trasvec} for fields}

\begin{theorem}\label{main for field}
     Let $K$ be a field and assume that $A$ contains a finite field $\mathbb{F}_q$ with $q=p^r$ elements for a prime $p$. Let $n\geq 2$. We consider a finite subgroup $G$ of $\mathrm{GL}(K^n)$ such that every non-identity element of $G$ inside $\GL(K^n)$ is a transvection. Consider the ring $K[X_1,X_2,\dots, X_n]$ and let $G$ act linearly on the ring (fixing $K$). Then $(K[X_1,X_2,\dots, X_n])^{G}$ is a polynomial algebra.  
\end{theorem}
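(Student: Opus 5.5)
By Lemma \ref{conjugate lem}, $G$ is an elementary abelian $p$-group and, after a $K$-linear change of variables, is a subgroup of $E_D(n-1)$ or of $E^*_D(n-1)$ for some finite subgroup $D\subset (K,+)$; we may take $r=n-1$, since $E_D(r)\subset E_D(n-1)$. A linear change of variables is a graded $K$-algebra automorphism of $K[X_1,\dots,X_n]$ that carries $G$-invariants to invariants of the conjugate group. It therefore suffices to treat the two cases $G\subseteq E_D(n-1)$ and $G\subseteq E^*_D(n-1)$. One must be careful about whether $G$ acts on the variables through the matrices or through their transposes (the dual action). This interchanges the two cases, but since both are handled, nothing is lost.

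\emph{Case of a common direction.} In this case every $g\in G$ fixes all variables but one, say $X_n$, and sends $X_n\mapsto X_n+\ell_g(X_1,\dots,X_{n-1})$, where $\ell_g$ is a linear form with coefficients in $D$. Let $W=\{\ell_g: g\in G\}$. This is a finite additive subgroup, hence a finite $\mathbb{F}_p$-subspace, of $L=K X_1\oplus\cdots\oplus KX_{n-1}$, and $g\mapsto \ell_g$ is an isomorphism $G\cong W$. Put
\[
F(X_n)=\prod_{w\in W}(X_n+w).
\]
This is $G$-invariant, since each $g$ permutes the factors, and it is monic in $X_n$ of degree $|G|$ over $R=K[X_1,\dots,X_{n-1}]$. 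Then $K[X_1,\dots,X_{n-1},F]$ is a polynomial ring contained in the invariants, and $K[X]$ is free over it of rank $|G|$ with basis $1,X_n,\dots,X_n^{|G|-1}$. Comparing fraction fields, $K(X)^G$ has index $|G|$ in $K(X)$, and $K(X_1,\dots,X_{n-1},F)$ also has index $|G|$, so the two fields coincide. The subring $K[X_1,\dots,X_{n-1},F]$ is integrally closed, and $K[X]^G$ is integral over it with the same fraction field. Hence they are equal.

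\emph{Case of a common hyperplane.} In this case $g$ fixes $X_n$ and sends $X_i\mapsto X_i+a_i(g)X_n$ for $i\le n-1$. The map $g\mapsto (a_1(g),\dots,a_{n-1}(g))$ embeds $G$ into $E_D(n-1)\cong D^{n-1}$. I apply Lemma \ref{comm alg lemma} to the subgroup $G\subset E\cong D^{n-1}$. It produces an automorphism $\lambda$ of $D^{n-1}$ with $\lambda(G)=A_1\oplus\cdots\oplus A_{n-1}$. The main obstacle is that $\lambda$ is only $\mathbb{F}_p$-linear, so it need not come from a $K$-linear change of the variables $X_1,\dots,X_{n-1}$. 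I would therefore avoid relying on $\lambda$ and argue directly with orbit products. For each $i$, let $A_i=\{a_i(g):g\in G\}\subset D$ and set
\[
f_i=\prod_{a\in A_i}(X_i+aX_n).
\]
Each $f_i$ is invariant, and $f_1,\dots,f_{n-1},X_n$ are algebraically independent, forming a homogeneous system of parameters. The product of their degrees is $\prod|A_i|$, which equals $|G|$ exactly when $G=\bigoplus A_i$ in these coordinates. If $G$ is strictly smaller, I would replace the $f_i$ by successive orbit products, in the spirit of Dickson/Mui invariants: filter $G$ by $G_k=\{g: a_1(g)=\dots=a_k(g)=0\}$ and define $f_k=\prod_{g\in G_{k-1}/G_k} g(X_k)$. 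Then $\prod\deg f_k=\prod|G_{k-1}/G_k|=|G|$, and each $f_k$ should be $G$-invariant modulo the earlier structure; checking this is where Lemma \ref{comm alg lemma}, or a triangular (row-reduced) form of $G$ viewed as an $\mathbb{F}_p$-space, enters.

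Once homogeneous invariants $f_1,\dots,f_{n-1},X_n$ are in hand with degree product $|G|$ and forming a system of parameters, the standard criterion applies: $K[X]^G$ is the polynomial algebra $K[f_1,\dots,f_{n-1},X_n]$. Indeed, $K[X]$ is free of rank $\prod\deg f_i=|G|$ over $P=K[f_1,\dots,f_{n-1},X_n]$. This gives $[K(X):\operatorname{Frac}P]=|G|=[K(X):K(X)^G]$, so the fraction fields agree, and normality of $P$ then yields $P=K[X]^G$. I expect the main obstacle to be producing invariants $f_k$ of the right degrees when $G$ is not a direct sum along coordinates.
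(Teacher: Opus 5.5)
Your reduction via Lemma~\ref{conjugate lem} and your common-direction case are sound. The orbit product $\prod_{w\in W}(X_n+w)$ is cleaner than the paper's version, since it needs no coordinatewise splitting of $G$. Your closing rank/normality argument is also fine. The genuine gap is the common-hyperplane case: there you never produce invariants of the right degrees. Your proposed $f_k=\prod_{g\in G_{k-1}/G_k}g(X_k)=\prod_{a\in a_k(G_{k-1})}(X_k+aX_n)$ is invariant only under $G_{k-1}$. An element $h\notin G_{k-1}$ translates the set of roots by $a_k(h)$, which need not lie in $a_k(G_{k-1})$. Concretely, take $K=\mathbb{F}_8=\mathbb{F}_2(\theta)$ with $\theta^3=\theta+1$ and $n=3$. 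Let $(a,b)\in W=\mathrm{span}_{\mathbb{F}_2}\{(1,0),(0,1),(\theta,\theta^2)\}$ act by $X_1\mapsto X_1+aX_3$, $X_2\mapsto X_2+bX_3$, $X_3\mapsto X_3$. Then $G_1=\{(0,0),(0,1)\}$ and $f_2=X_2(X_2+X_3)$, but $(\theta,\theta^2)$ sends $f_2$ to $f_2+\theta X_3^2$.

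Your suspicion about Lemma~\ref{comm alg lemma} is well founded, and it is exactly the step the paper relies on. The paper asserts that after conjugation $G=\{I+\sum_i\alpha_iE_{i,n}:\alpha_i\in A_i\}$. But a conjugation producing this shape must fix the common hyperplane, so it acts on $W\subset K^{n-1}$ only by a $K$-linear automorphism. In the example no $M\in\GL_2(K)$ makes $MW$ of the form $A_1\oplus A_2$. With $|A_1||A_2|=8$, this would force two $\mathbb{F}_2$-independent elements of $W$ to be $K$-proportional. However, the seven nonzero elements of $W$ have pairwise distinct slopes $b/a\in\{0,\infty,1,\theta,\theta^3,\theta^5,\theta^6\}$. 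So neither the paper's coordinatewise products $g_i$ nor your uncorrected $f_k$ can reach degree product $|G|=8$ here.

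The missing idea is a correction term. You need invariants of the form $\prod_{a\in a_k(G_{k-1})}(X_k+aX_n)+Q_k$. Here $Q_k$ is a homogenized additive polynomial in $X_1,\dots,X_{k-1}$ (with powers of $X_n$) that cancels the defect $h(f_k)-f_k$. You also need a choice of coordinates guaranteeing $\deg Q_k\le |G_{k-1}/G_k|$, so that the result is still monic in $X_k$ of the right degree. In the example, $\prod_{a\in\{0,1,\theta,\theta+1\}}(X_1+aX_3)$, $X_2^2+X_2X_3+\theta^4(X_1^2+X_1X_3)$ and $X_3$ are invariants of degrees $4,2,1$ forming a system of parameters, so the invariant ring is polynomial. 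Until you prove that such $f_k$ exist in general, the common-hyperplane case of your proposal remains unproved.
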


\begin{proof}
    By Lemma \ref{conjugate lem}, $G$ is an elementary abelian $p$-group which is conjugate to a subgroup of $E_D(r)$ or $E^*_D(r)$ for some finite subgroup $D$ of $(K,+)$ and for some $r\in \{1,\dots, n-1\}$. Suppose $G$ is conjugate to a subgroup of $E_D(r)$. We can view $G$ as a subgroup of $E_D(r)$. Then by Lemma \ref{isom lemma} (1) and by Lemma \ref{comm alg lemma}, we may choose subgroups $A_1,\dots, A_r\subset D$ so that $G=\{I + \alpha_1 E_{1,n}+\cdots +\alpha_r E_{r,n} \mid \alpha_i \in A_i, i = 1, \ldots, r\}$. Let $\{y_1, y_2,\dots, y_n\}$ be standard dual $K$-basis of ${(K^n)}^*$. With respect to this basis, the action of $G$ is given by \[
(I + \alpha_1 E_{1,n}+\cdots +\alpha_r E_{r,n})\left(y_{j}\right)= \begin{cases}y_{j} & j \geq  r+1 \\ y_{j}+\alpha_j y_{n} & j\leq r .\end{cases}
\] Let \[
g_{i}=\prod_{\alpha_i \in A_i}\left(y_{i}+\alpha_i y_{n}\right)\]
for $i=1, \ldots, r$. Now $g_i\in K[y_1,\dots, y_n]^{G}$. By Proposition 5.3.7 in \cite{smith1995polynomial}, $g_1,\dots, g_r, y_{r+1}, \dots , y_n$ are a system of parameters. Since $deg(g_i)=|A_i|$ for all $1\leq i\leq r$, we have\\
$deg(g_1)\cdots deg(g_r)deg(y_{r+1})\cdots deg(y_{n})=|A_1|\cdots|A_r|=|G|$. Thus by Proposition 5.5.5 in \cite{smith1995polynomial}, \[
K\left[y_{1}, \ldots, y_{n}\right]^{G}=K\left[g_{1}, \ldots, g_{r}, y_{r+1}, \ldots, y_{n}\right]
.\]

Now suppose $G$ is conjugate to a subgroup of $E^*_D(r)$. We can view $G$ as a subgroup of $E^*_D(r)$. Then by Lemma \ref{isom lemma} (2) and by Lemma \ref{comm alg lemma}, we may choose subgroups $A_1,\dots, A_r\subset D$ so that $G=\{I + \alpha_1 E_{n,1}+\cdots +\alpha_r E_{n,r} \mid \alpha_i \in A_i, i = 1, \ldots, r\}$. Let $\{y_1, y_2,\dots, y_n\}$ be standard dual $K$-basis of ${(K^n)}^*$. With respect to the this basis, the action of $G$ is given by
\[
(I + \alpha_1 E_{n,1}+\cdots +\alpha_r E_{n,r})\left(y_{j}\right)= \begin{cases}y_{j} & j \neq n \\ y_{n}+\alpha_1 y_1+\cdots +\alpha_r y_r  & j=n .\end{cases}
\] Let
\[
g=\prod_{\alpha_1\in A_1,\dots, \alpha_r \in A_r}\left(y_{n}+\alpha_r y_r + \cdots +\alpha_1y_1\right).\] Now $g\in K[y_1,\dots, y_n]^{G}$. By Proposition 5.3.7 in \cite{smith1995polynomial}, $y_1,\dots, y_{n-1}, g$ are a system of parameters. Since $deg(g)=|A_1|\cdots |A_r|$, we have $deg(y_1)\cdots deg(y_{n-1})deg(g)=|G|$. Hence by Proposition 5.5.5 in \cite{smith1995polynomial}, \[
K\left[y_{1}, \ldots, y_{n}\right]^{G}=K\left[y_{1}, \ldots, y_{n-1}, g\right]
\]

\end{proof}

\subsection{Theorem \ref{main trasvec} for DVRs}

\begin{theorem}\label{main for DVR}
     Let $(\mathcal{O},(\pi))$ be a DVR and let $K=\mathcal{O}_\pi$ where $\mathcal{O}_\pi$ is the localization of $\mathcal{O}$ at the multiplicative set $\{\pi^i\}_{i}$ (which is the field of fractions of $\mathcal{O}$). Assume that $\mathcal{O}$ contains a finite field $\mathbb{F}_q$ with $q=p^r$ elements for a prime $p$.  Let $n\geq 2$. We consider a finite subgroup $G$ of $\mathrm{GL}(\mathcal{O}^n)$ such that every non-identity element of $G$ inside $\GL(K^n)$ is a transvection. Consider the ring $\mathcal{O}[X_1,X_2,\dots, X_n]$ and let $G$ act linearly on the ring (fixing $\mathcal{O}$). Then $(\mathcal{O}[X_1,X_2,\dots, X_n])^{G}$ is a polynomial algebra. 
\end{theorem}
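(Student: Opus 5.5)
The plan is to reduce to the field case of Theorem \ref{main for field}, working over an $\mathcal{O}$-basis instead of a $K$-basis, and then to descend the polynomial generators from $K$ to $\mathcal{O}$ using monicity. If $G$ is trivial there is nothing to prove, so fix a non-identity $\sigma\in G$. By Lemma \ref{conjugate lem} applied to $G\subset \GL(K^n)$, all elements of $G$ share either a common hyperplane or a common direction. I treat the common hyperplane case; the common direction case is handled the same way using part (2) of Lemma \ref{new lemma} in place of part (1).

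\textbf{Step 1: an $\mathcal{O}$-basis adapted to $G$.} In the hyperplane case, take a $K$-basis $v_1,\dots,v_{n-1}$ of $H$ with $v_{n-1}=x_\sigma$, and choose $v_n$ with $\phi(v_n)=1$. Then $\sigma(v_i)=v_i$ for $i\le n-1$ and $\sigma(v_n)=v_{n-1}+v_n$, and $\sigma$ has order $p$. Lemma \ref{new lemma} (1)(a) gives an $\mathcal{O}$-basis $w_1,\dots,w_{n-1},w_n=v_n+sv_{n-1}$ of $\mathcal{O}^n$ in which $\sigma$ has the same shape. Every other $\tau\in G$ satisfies $\tau(v_n)=v_n+\sum_{j<n}a_{\tau j}v_j$. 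The computation in Lemma \ref{new lemma} (1)(b) is linear, so it extends verbatim to such sums and gives $\tau(w_n)=w_n+\sum_j a_{\tau j}w_j$. Since $\tau\in\GL(\mathcal{O}^n)$ and the $w_i$ form an $\mathcal{O}$-basis, the matrix entries $a_{\tau j}$ lie in $\mathcal{O}$. Hence the additive group $D$ they generate is a finite subgroup of $(\mathcal{O},+)$. So after an $\mathcal{O}$-linear change of variables, $G$ is a subgroup of $E_D(n-1)\subset\GL(\mathcal{O}^n)$ as in Hypothesis \ref{hypo dvr}.

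\textbf{Step 2: normal form and candidate generators.} By the lemma following Lemma \ref{new lemma}, together with Lemma \ref{comm alg lemma}, we may write $G=\{I+\sum\alpha_iE_{i,n}:\alpha_i\in A_i\}$ with $A_i\subset D\subset\mathcal{O}$. One point needs checking here: the isomorphism $\lambda$ of Lemma \ref{comm alg lemma} must be realizable by an $\mathcal{O}$-linear change of basis. Its effect is to change coordinates in the space spanned by the transvectors. I would check that it is induced by an element of $\GL_{n-1}(\mathbb{F}_p)$, or else avoid the lemma by choosing an $\mathbb{F}_p$-adapted basis of $D$. Then, exactly as in Theorem \ref{main for field}, define $g_i=\prod_{\alpha\in A_i}(y_i+\alpha y_n)$. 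These now lie in $\mathcal{O}[y_1,\dots,y_n]^G$, and over $K$ we have $K[y]^G=K[g_1,\dots,g_r,y_{r+1},\dots,y_n]$. In the direction case the analogous element $g=\prod(y_n+\sum\alpha_iy_i)$ has coefficients in $\mathcal{O}$.

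\textbf{Step 3 (the main point): descent to $\mathcal{O}$.} Set $B=\mathcal{O}[g_1,\dots,g_r,y_{r+1},\dots,y_n]$. Each $g_i$ is monic in $y_i$ of degree $|A_i|$ with coefficients in $\mathcal{O}[y_n]$. Repeated division by monic polynomials therefore shows that $\mathcal{O}[y]$ is a free $B$-module with basis $\{y_1^{e_1}\cdots y_r^{e_r}:0\le e_i<|A_i|\}$, and that the same monomials form a basis of $K[y]$ over $B\otimes K$. Now take $f\in\mathcal{O}[y]^G$. It lies in $K[g,y']$, so its expansion in this basis has only the monomial $1$. On the other hand, $f$ also has an expansion with coefficients in $B$. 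By uniqueness of coordinates over $B\otimes K$, we get $f\in B$. Thus $\mathcal{O}[y]^G=B$, and since $g_1,\dots,g_r,y_{r+1},\dots,y_n$ are algebraically independent over $K$, $B$ is a polynomial algebra over $\mathcal{O}$.

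The step I expect to be most delicate is Step 1 combined with the $\mathcal{O}$-realizability of the normal form in Step 2. The descent in Step 3 is routine once the generators are known to be monic with integral coefficients.
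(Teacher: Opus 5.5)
\textbf{The gap is in Step 2, at exactly the point you flag, and it cannot be closed.} The split form $G=\{I+\sum_i\alpha_iE_{i,n}:\alpha_i\in A_i\}$ is in general not reachable by any change of basis, over $\mathcal{O}$ or even over $K$.

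In the common-hyperplane case, $G$ is determined by the finite $\mathbb{F}_p$-subspace $W=\{(a_{\tau 1},\dots,a_{\tau,n-1}):\tau\in G\}\subseteq K^{n-1}$. A change of basis only replaces $W$ by $c\,h(W)$ with $h\in\GL_{n-1}(K)$ and $c\in K^{\times}$. So the split form means $W=\bigoplus_i A_iu_i$ for some $K$-basis $(u_i)$. Lemma \ref{comm alg lemma} only supplies an abstract $\mathbb{F}_p$-linear isomorphism $E\to D^r$. That map is not $K$-linear, so it does not act on the polynomial ring. Neither of your suggested fixes changes this: elements of $\GL_{n-1}(\mathbb{F}_p)$ are $K$-linear, and a choice of $\mathbb{F}_p$-basis of $D$ does not affect how $W$ sits inside $K^{n-1}$.

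Here is a concrete obstruction. Take $\mathcal{O}=\mathbb{F}_{p^3}[t]_{(t)}$, $n=3$, and $G=\{I+xE_{1,3}+x^pE_{2,3}:x\in\mathbb{F}_{p^3}\}$. This is a group of order $p^3$ whose non-identity elements are transvections with a common hyperplane. For $u=(a,b)\in K^2$, the set $W\cap Ku$ lies in $\{(x,x^p):ax^p=bx\}$ if $a\neq 0$, and equals $\{0\}$ if $a=0$. So $W\cap Ku$ has at most $p$ elements. But $W=\bigoplus_iA_iu_i$ with $\prod|A_i|=p^3$ forces $|A_i|\ge p^2$ for some $i$. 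Hence no invariants $g_i=\prod_{\alpha\in A_i}(y_i+\alpha y_n)$ of the kind you need exist. In fact the invariant ring here is $\mathcal{O}[y_3,\,y_2y_3^{p-1}-y_1^p,\,y_2^{p^2}-y_1y_3^{p^2-1}]$, with generator degrees $1,p,p^2$. The paper's proof uses Lemma \ref{comm alg lemma} in exactly the same way, so it does not supply this step either.

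\textbf{Step 1 cites a false lemma, but its conclusion is fine.} Lemma \ref{new lemma}(1)(a) fails as stated: for $n=2$ and $\sigma=I+\pi E_{1,2}$ one has $(\sigma-I)(\mathcal{O}^2)=\pi\mathcal{O}e_1$. That is not a direct summand, whereas the claimed basis would give $(\sigma-I)(\mathcal{O}^2)=\mathcal{O}w_1$. What you actually need holds by a direct argument:
\begin{enumerate}
\item $H\cap\mathcal{O}^n$ is saturated, hence a direct summand of rank $n-1$.
\item Take an $\mathcal{O}$-basis $w_1,\dots,w_{n-1}$ of it and complete it by some $w_n$.
\item Every $\tau\in G$ fixes $w_1,\dots,w_{n-1}$, and $\tau(w_n)-w_n\in H\cap\mathcal{O}^n$, so $G\subseteq E_D(n-1)$ with $D\subseteq\mathcal{O}$.
\end{enumerate}
The common-direction case works the same way, using a primitive vector spanning the direction.

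\textbf{Step 3 is sound and more general than you need.} It is a cleaner descent than the paper's route through equal fraction fields, regularity of $U$, and normality. Your uniqueness-of-coordinates argument only needs two facts: $\mathcal{O}[y]$ is free over $\mathcal{O}[h_1,\dots,h_n]$, and $K[y]^G=K[h]$. Both hold for any homogeneous invariants with $\prod\deg h_i=|G|$ whose reductions mod $\pi$ form a system of parameters of $\kappa[y]$, by graded Nakayama plus a rank count. In the common-direction case no splitting is needed at all: $f=\prod_{\tau\in G}\tau(y_n)$ is monic in $y_n$ of degree $|G|$, and $y_1,\dots,y_{n-1},f$ do the job.

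\textbf{What is missing} is a construction of such $h_i$ in the common-hyperplane case when $W$ does not split, as in the example above.
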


\begin{proof}

 By Lemma \ref{conjugate lem}, $G$ inside $\GL(K^n)$ is an elementary abelian $p$-group which is conjugate to a subgroup of $E_D(r)$ or $E^*_D(r)$ for some finite subgroup $D$ of $(K,+)$ and for some $r\in \{1,\dots, n-1\}$. We can view $G$ as a subgroup of $E_D(r)$ or $E^*_D(r)$. Let $D'=\pi^k D$ where $k\in \mathbb{Z}$ such that $D'\subset (\mathcal{O},+)$. Let $U=diag(1,\dots,1, \pi^k)$. Now $US_i(\pi^k\alpha)U^{-1}=S_i(\alpha)$ and ${(U^T)}^{-1} (S_i(\pi^k\alpha))^T U^T =(S_i(\alpha))^T$ for $\alpha\in D$. Thus $E_{D'}(r)$ is conjugate to $E_D(r)$ inside $\GL(K^n)$ and $E^*_{D'}(r)$ is conjugate to $E^*_D(r)$ inside $\GL(K^n)$. Hence without loss of generality, we can assume that $D$ is a subgroup of $(\mathcal{O},+)$ and $G$ is a subgroup of $E_D(r)$ or $E^*_D(r)$.\smallskip

 Suppose $G$ is a subgroup of $E_D(r)$. Then by Lemma \ref{isom lemma} (1) and by Lemma \ref{comm alg lemma}, we may choose subgroups $A_1,\dots, A_r\subset D$ so that inside $\GL(K^n)$ we have $G=\{I + \alpha_1 E_{1,n}+\cdots +\alpha_r E_{r,n}=S_1(\alpha_1)\cdots S_r(\alpha_r) \mid \alpha_i \in A_i, i = 1, \ldots, r\}$. By Lemma \ref{new lemma} (1), $G$ is the same set inside $\GL(\mathcal{O}^n)$. Let $\{z_1, z_2,\dots, z_n\}$ be dual $\mathcal{O}$-basis of ${(\mathcal{O}^n)}^*$ corresponding to the $\mathcal{O}$-basis of $\mathcal{O}^n$ constructed in Lemma \ref{new lemma} (1). With respect to this basis, the action of $G$ is given by
\[
I + \alpha_1 E_{1,n}+\cdots +\alpha_r E_{r,n}\left(z_{j}\right)= \begin{cases}z_{j} & j \geq r+1 \\ z_{j}+\alpha_j z_{n} & j\leq r .\end{cases}
\]
Let 
\[
f_{i}=\prod_{\alpha_i\in A_i}\left(z_{i}+\alpha_iz_{n}\right)\]
for $i=1, \ldots, r$. Let $R=\mathcal{O}[z_1,\dots,z_n]$, $S=\mathcal{O}[z_1,\dots,z_n]^{G}$ and $U=\mathcal{O}[f_1,\dots,f_r,z_{r+1},\dots,z_r]$. Hence $f_{i} \in S$ and each $f_i$ is monic in $z_i$. Now $R/(f_1,\dots,f_r,z_{r+1},\dots, z_n)\cong \mathcal{O}[z_1,\dots, z_r]/(z_1^{|A_1|},\dots,z_r^{|A_r|})$ is finitely generated $<\bar{1},\bar{u_1},\dots, \bar{u_s}>$ as an $\mathcal{O}$-module where $deg(u_i)\geq 1$. We claim that $R$ is finitely generated over $U$ as $<1, u_1,\dots,u_s>$. Let $N=<1, u_1,\dots,u_s>$. It is enough to show that $N_m=R_m$ for all $m\geq 0$. We prove this by induction. Clearly it holds for $m=0$. Assume by induction for all $d< m$. Let $\alpha\in R_m$. Hence $\alpha=\Sigma u_j \xi_j+ \delta$ where $\delta\in (f_1,\dots,f_r,z_{r+1},\dots, z_n)R$. So $\alpha= \Sigma u_j\xi_j +\Sigma f_i\delta_i +\Sigma z_k \delta'_k$. Now all $\delta_i\in R_d$ and $\delta'_k\in R_{d'}$ for $d,d'<m$. Thus by induction $\delta_i\in N_d$ and $\delta'_k\in N_{d'}$. Hence $\alpha\in N_n$ which proves our claim.

Let $K'$, $Q$ and $T$ be respective field of fractions of $R$, $S$ and $U$. Now $K'/Q$ is Galois with Galois group $G$. Also $[K:T]\leq deg(f_1)\cdots deg(f_r)=|A_1|\cdots |A_r|=|G|$. Thus $Q=T$. Now $R$ is finite integral over $U$. Thus $R/\pi R\cong \kappa [z_1,\dots,z_n]$   is finite integral over $U/\pi U\cong \kappa [f_1,\dots,f_r,z_{r+1},\dots,z_n]$ where $\kappa =\mathcal{O}/\pi \mathcal{O}$. Hence $U/\pi U$ has dimension $n$, thus is regular. Let $\mathfrak{M}$ be the maximal homogenous ideal of $U$. Now $\pi\not \in \mathfrak{M}^2$. As $U/\pi U$ is regular, thus $U$ is regular. Hence it is also integrally closed. As $S$ is finite integral over $U$, we get \[
\mathcal{O}\left[z_{1}, \ldots, z_{n}\right]^{G}=\mathcal{O}\left[f_{1}, \ldots, f_{r}, z_{r+1}, \ldots, z_{n}\right]
\]

Now suppose $G$ is a subgroup of $E^*_D(r)$. Then by Lemma \ref{isom lemma} (2) and by Lemma \ref{comm alg lemma}, we may choose subgroups $A_1,\dots, A_r\subset D$ so that inside $\GL(K^n)$ we have $G=\{I + \alpha_1 E_{n,1}+\cdots +\alpha_r E_{n,r}=(S_1(\alpha_1))^T\cdots (S_r(\alpha_r))^T \mid \alpha_i \in A_i, i = 1, \ldots, r\}$. By Lemma \ref{new lemma} (2), $G$ is the same set inside $\GL(\mathcal{O}^n)$. Let $\{z_1, z_2,\dots, z_n\}$ be dual $\mathcal{O}$-basis of ${(\mathcal{O}^n)}^*$ corresponding to the $\mathcal{O}$-basis of $\mathcal{O}^n$ constructed in Lemma \ref{new lemma} (2). With respect to this basis, the action of $G$ is given by
\[
I + \alpha_1 E_{n,1}+\cdots +\alpha_r E_{n,r}\left(z_{j}\right)= \begin{cases}z_{j} & j \neq n \\ z_{n}+\alpha_1 z_1+\cdots +\alpha_r z_r  & j=n .\end{cases}
\]

Let
\[
f=\prod_{\alpha_1\in A_1,\dots, \alpha_r \in A_r}\left(z_{n}+\alpha_r z_r + \cdots +\alpha_1 z_1\right).\] Let $R=\mathcal{O}[z_1,\dots,z_n]$, $S=\mathcal{O}[z_1,\dots,z_n]^{G}$ and $U=\mathcal{O}[z_1,\dots,z_{n-1},f]$. Hence $f \in S$ and $f$ is monic in $z_n$. Now $R/(z_1,\dots,z_{n-1},f)\cong \mathcal{O}[z_n]/(z_n^{|A_1|\cdots|A_r|})$ is finitely generated $<\bar{1},\bar{u_1},\dots, \bar{u_s}>$ as an $\mathcal{O}$-module where $deg(u_i)\geq 1$. We claim that $R$ is finitely generated over $U$ as $<1, u_1,\dots,u_s>$. Let $N=<1, u_1,\dots,u_s>$. It is enough to show that $N_m=R_m$ for all $m\geq 0$. We prove this by induction. Clearly it holds for $m=0$. Assume by induction for all $d< m$. Let $\alpha\in R_m$. Hence $\alpha=\Sigma u_j \xi_j+ \delta$ where $\delta\in (z_1,\dots,z_{n-1},f)R$. So $\alpha= \Sigma u_j\xi_j +\Sigma z_k \delta'_k + f\delta''$. Now all $\delta'_k\in R_d$ and $\delta''\in R_{d'}$ for $d,d'<m$. Thus by induction $\delta'_k\in N_d$ and $\delta''\in N_{d'}$. Hence $\alpha\in N_n$ which proves our claim. Let $K'$, $Q$ and $T$ be respective field of fractions of $R$, $S$ and $U$. Now $K'/Q$ is Galois with Galois group $G$. Also $[K:T]\leq deg(f)=|A_1|\cdots |A_r|=|G|$. Thus $Q=T$. Now $R$ is finite integral over $U$. Thus $R/\pi R\cong \kappa [z_1,\dots,z_n]$   is finite integral over $U/\pi U\cong \kappa [z_1,\dots,z_{n-1},f]$ where $\kappa =\mathcal{O}/\pi \mathcal{O}$. Hence $U/\pi U$ has dimension $n$, thus is regular. Let $\mathfrak{M}$ be the maximal homogenous ideal of $U$. Now $\pi\not \in \mathfrak{M}^2$. As $U/\pi U$ is regular, thus $U$ is regular. Hence it is also integrally closed. As $S$ is finite integral over $U$, we get \[
\mathcal{O}\left[z_{1}, \ldots, z_{n}\right]^{G}=\mathcal{O}\left[z_{1}, \ldots, z_{n-1}, f\right]
\]\end{proof}

\subsection{Theorem \ref{main trasvec} for Dedekind Domains}

\begin{proof}[Proof of Theorem \ref{main trasvec}]
Now $A$ is a Dedekind domain and $K$ is its field of fractions. Let \\
$R=A [X_1, X_2,\dots , X_n]$. We need to show that $R^G$ is regular. It suffices to show that $(R^G)_{\mathfrak P}$ is regular for all homogeneous prime ideals $\mathfrak{P}$ of $R^G$. Let $S'=R^G\backslash \mathfrak P$ and $\mathfrak{p}=A\cap \mathfrak P$ and $S=A\backslash \mathfrak p$. Now $S=A\cap S'\subset S'\subset R^G\subset R$. Thus $(R^G)_{\mathfrak P}=S'^{-1}(R^G)=S'^{-1}(S^{-1}(R^G))$. As localization of regular rings is regular, it suffices to show that $S^{-1}(R^G)$ is regular. Now $S^{-1}(R^G)\subset S^{-1} R$ and hence $S^{-1}(R^G)\subset (S^{-1} R)^G$. Now let $\frac{a}{s}\in S^{-1}R$ such that $\sigma(\frac{a}{s})=\frac{a}{s}$ for all $\sigma\in G$. Thus $\frac{\sigma(a)}{s}=\frac{a}{s}$ and so there is a $t\in S$ such that $ts(\sigma(a)-a)=0$. Since $R$ is an integral domain, we have $a\in R^G$. So $S^{-1}(R^G)= (S^{-1} R)^G$. Thus it suffices to show that $(S^{-1} R)^G$ is regular.
\smallskip

Now $\mathfrak{p}$ is a prime ideal of $A$. If $\mathfrak p=(0)$, then $(S^{-1}R)^G=(K[X_1,\dots, X_n])^G$. Also as $G$ is a subgroup of $GL(A^n)$, it is naturally a subgroup of $GL(K^n)$. Now $(S^{-1}R)^G$ is regular by Theorem \ref{main for field}. If $\mathfrak p\neq (0)$, then $(S^{-1}R)^G=(A_{\mathfrak{p}}[X_1, X_2,\dots , X_n])^{G}$ where $(A_{\mathfrak{p}},\mathfrak{p} A_{\mathfrak{p}})$ is a DVR with $K$ as the field of fractions. Also as $G$ is a subgroup of $GL(A^n)$, it is naturally a subgroup of $GL(A_{\mathfrak{p}}^n)$. Now $(S^{-1}R)^G$ is regular by Theorem \ref{main for DVR}.\end{proof}




\section{Conflict of interest statement}
The authors have no conflict of interest to declare that are relevant to this article.

\section{Data availability statement}

\emph{The manuscript has no associated data.}


\bibliographystyle{plain}

\end{document}